\newtheorem{theorem}{Theorem}[section]
\newtheorem{definition}{Definition}[section]
\newtheorem{lemma}{Lemma}[section]
\newtheorem{corollary}{Corollary}[section]
\def\CC{{\mathbb C}}
\def\RR{{\mathbb R}}
\begin{document}
\allowdisplaybreaks
 
\title[Some Geometric Properties]{Some Geometric Properties of the Solutions of Complex Multi-Affine Polynomials of Degree Three}


\author{Chayne Planiden}
\address{The University of British Columbia, Okanagan  \\
3333 University Way, Unit 5 \\
Kelowna BC, V1V 1V7 Canada
}
\email{chayne.planiden@alumni.ubc.ca}
\thanks{The first author was partially supported by UBC Graduate Entrance Scholarship. }


\author{Hristo Sendov}
\address{Department of Statistical and Actuarial Sciences \\
Western University \\
1151 Richmond Str. \\
London, ON, N6A 5B7 Canada}
\curraddr{}
\email{hssendov@stats.uwo.ca}

\subjclass[2010]{Primary 30C10}

\date{}

\keywords{Polynomial, multiaffine symmetric polynomial, M\"obius transformation, variety, birational equivalence, polarization}

 \begin{abstract} 
In this paper we consider complex polynomials $p(z)$ of degree three with distinct zeros and their polarization $P(z_1,z_2,z_3)$ with three complex variables. 
We show, through elementary means, that the variety $P(z_1,z_2,z_3)=0$ is birationally equivalent to the variety $z_1z_2z_3 + 1 = 0$. Moreover, the rational map certifying the equivalence is a simple M\"obius transformation. 

The second goal of this note is to present a geometrical curiosity relating the zeros of $z \mapsto P(z,z,z_k)$ for $k=1,2,3$, where $(z_1,z_2,z_3)$ is arbitrary point on the variety $P(z_1,z_2,z_3)=0$.
\end{abstract}

\maketitle

\section{Introduction}
Denote by $\CC$ the complex plane and let $\CC^*:= \CC \cup \{\infty\}$. 
In this note, we consider polynomials 
$$
p(z) = (z-\alpha_1) (z-\alpha_2) (z-\alpha_3) = z^3 + a z^{2} + b z +c
$$ 
having distinct zeros $\alpha_1, \alpha_2,$ and $\alpha_3$. 
The {\it polarization of $p(z)$ with three variables} is defined to be
\begin{align}
\label{gen-poly}
 P(z_1,z_2,z_3)= z_1z_2z_3+ \frac{a}{3}(z_1z_2+z_2z_3+z_3z_1)+\frac{b}{3}(z_1+z_2+z_3)+c.
\end{align}
Clearly, $p(z)= P(z,z,z)$ for all $z \in \CC$. It is easy to see that the polynomial $P(z_1,z_2,z_3)$ is irreducible in 
$\CC[z_1,z_2,z_3]$, hence the variety in $\CC^3$ defined by the equation $P(z_1,z_2,z_3)=0$
is irreducible,  see \cite[Chapter 6 \S 6, Exercise 11]{CLO:1992}. The variety $P(z_1,z_2,z_3)=0$ 
has no singular points, and its dimension is two, see \cite[Chapter 9 \S 6, Theorem 9]{CLO:1992}. 

One of the goals of this note is to show, through elementary means, that the variety $P(z_1,z_2,z_3)=0$ is birationally 
equivalent to the variety $z_1z_2z_3 + 1 = 0$. Moreover, the rational map certifying the equivalence is a simple M\"obius transformation. This is accomplished in Theorem~\ref{non-deg-corr}, but the main ingredient is Theorem~\ref{thm-reduction}. The second goal of this note is to present a geometrical curiosity relating the zeros of $z \mapsto P(z,z,z_k)$
for $k=1,2,3$, for any point $(z_1,z_2,z_3)$ on the variety, see Theorem~\ref{main-thm-2} or its equivalent Theorem~\ref{scc}.

The observations in this note have their roots in \cite{SS:2012}. That work initiated the investigation of the properties of the {\it loci} of complex polynomials. A {\it locus} of the polynomial (\ref{gen-poly}) is a closed subset of $\CC$, minimal with respect to inclusion, that contains at least one element from every solution $(z_1,z_2,z_3)$ of  (\ref{gen-poly}). A locus of a polynomial of degree $n$ is defined similarly through its polarization with $n$ complex variables. The notion of a locus allows for the formulation of extremal versions of several classical theorems about the zeros of complex polynomials, such as Grace's theorem, the Grace-Walsh-Szeg\H{o} coincidence theorem, the complex Rolle's theorem, and Laguerre's Theorem, see \cite{RS}. One of the main focuses of \cite{SS:2012} is the construction of several families of loci of the polynomial (\ref{gen-poly}).


\section{Preliminaries and notation}

Throughout this paper, the numbers $\alpha_1,\alpha_2$, and $\alpha_3$ are assumed to be distinct.
Denote the cubic roots of $-1$ by
$$
e_1 := e^{-i\pi/3}, e_2 := e^{i\pi/3}, \mbox{ and } e_3 := -1.
$$ 

\begin{lemma}
\label{lem-1.1}
Three complex numbers $\alpha_1$, $\alpha_2$, and $\alpha_3$ satisfy the relationship
\begin{align}
\label{rel-1}
(\alpha_1+\alpha_2+\alpha_3)^2 = 3(\alpha_1\alpha_2+\alpha_1\alpha_3+\alpha_2\alpha_3)
\end{align}
if and only if 
\begin{align}
\label{ifequitriag}
 \alpha_1e_1+\alpha_2e_2+\alpha_3e_3=0 \quad \mbox{ or } \quad \alpha_1e_2+\alpha_2e_1+\alpha_3e_3 = 0, \quad \mbox{but not both.}
\end{align}
Equivalently, the complex numbers $\alpha_1$, $\alpha_2$, and $\alpha_3$ are the vertices 
of an equilateral triangle. 
\end{lemma}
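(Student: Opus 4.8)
The plan is to reduce (\ref{rel-1}) to the vanishing of a single quadratic form, factor that form as a product of the two linear expressions appearing in (\ref{ifequitriag}), and then read off the geometry. First I would expand and simplify
$$(\alpha_1+\alpha_2+\alpha_3)^2 - 3(\alpha_1\alpha_2+\alpha_2\alpha_3+\alpha_3\alpha_1) = \alpha_1^2+\alpha_2^2+\alpha_3^2-\alpha_1\alpha_2-\alpha_2\alpha_3-\alpha_3\alpha_1,$$
so that (\ref{rel-1}) holds if and only if the quadratic $Q:=\alpha_1^2+\alpha_2^2+\alpha_3^2-\alpha_1\alpha_2-\alpha_2\alpha_3-\alpha_3\alpha_1$ vanishes.

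The central algebraic step is the factorization $Q = L_1 L_2$, where $L_1 := \alpha_1 e_1 + \alpha_2 e_2 + \alpha_3 e_3$ and $L_2 := \alpha_1 e_2 + \alpha_2 e_1 + \alpha_3 e_3$ are precisely the two forms in (\ref{ifequitriag}). I would verify this by direct expansion, using that $e_1,e_2,e_3$ are the cube roots of $-1$; concretely, writing $\omega := e^{2\pi i/3}$ one has $e_1 = -\omega$, $e_2 = -\omega^2$, $e_3 = -1$, and the products $e_ie_j$ reduce via $\omega^3=1$ and $1+\omega+\omega^2 = 0$ to give exactly the coefficients of $Q$. Granting this, (\ref{rel-1}) is equivalent to $L_1 = 0$ or $L_2 = 0$, which is (\ref{ifequitriag}) apart from the exclusivity clause.

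To obtain the ``but not both'' assertion I would use the distinctness hypothesis. Subtracting the two forms gives $L_1 - L_2 = (e_1 - e_2)(\alpha_1 - \alpha_2)$, and since $e_1 \neq e_2$, the simultaneous vanishing $L_1 = L_2 = 0$ would force $\alpha_1 = \alpha_2$, contradicting distinctness. Hence at most one of the two linear forms can vanish, and combined with the previous step exactly one does.

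It remains to identify $Q=0$ with the equilateral-triangle geometry. Here I would pass to centroid coordinates: set $g := (\alpha_1+\alpha_2+\alpha_3)/3$ and $\beta_k := \alpha_k - g$, so that $\beta_1+\beta_2+\beta_3 = 0$ and, since $1+\omega+\omega^2=0$, the forms $L_1, L_2$ are unchanged when the $\alpha_k$ are replaced by the $\beta_k$. Eliminating $\beta_3 = -\beta_1-\beta_2$ in $L_1 = 0$ and simplifying with $1+\omega^2 = -\omega$ yields $\beta_2 = \omega\beta_1$ and $\beta_3 = \omega^2\beta_1$; thus the displacements from the centroid are $\beta_1(1,\omega,\omega^2)$, three points of equal modulus spaced at $120^\circ$, i.e. the vertices of an equilateral triangle centred at $g$. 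The case $L_2=0$ gives the opposite orientation $\beta_k = \beta_1\omega^{-(k-1)}$, and conversely every equilateral triangle arises in one of these two ways, which completes the equivalence. I expect the main obstacle to be bookkeeping: keeping the correspondence $e_1=-\omega$, $e_2=-\omega^2$, $e_3=-1$ consistent throughout so that the factorization lands on the correct forms $L_1, L_2$ and the two orientations are assigned correctly.
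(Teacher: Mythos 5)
Your proposal is correct and follows essentially the same route as the paper: your factorization $Q = L_1L_2$ is just the symmetric restatement of the paper's observation that, viewed as a quadratic in $\alpha_3$, relation (\ref{rel-1}) has roots $\alpha_1e_1+\alpha_2e_2$ and $\alpha_1e_2+\alpha_2e_1$, and your $L_1-L_2=(e_1-e_2)(\alpha_1-\alpha_2)$ argument for the exclusivity matches the paper's use of the standing distinctness assumption. The only cosmetic difference is in the geometric identification, where you pass to centroid coordinates $\beta_k=\beta_1\omega^{k-1}$ while the paper rotates one side via $(\alpha_2-\alpha_1)e_2=(\alpha_1e_1+\alpha_2e_2)-\alpha_1$; both are routine and correct.
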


\begin{proof}
The proof is straightforward; just consider (\ref{rel-1}) as a quadratic equation with respect to $\alpha_3$ to see that the solutions
are $\alpha_1e_1+\alpha_2e_2$ and $\alpha_1e_2+\alpha_2e_1$. The last condition follows after observing that
$(\alpha_2-\alpha_1)e_2 = (\alpha_1e_1+\alpha_2e_2) - \alpha_1.$
\end{proof}

 A useful representation of $P(z_1,z_2,z_3)$ is given by
 \begin{align*}
 P(z_1,z_2,z_3) &= \Big(z_1z_2+ \frac{a}{3}(z_1+z_2) +\frac{b}{3} \Big) z_3 + \Big(  \frac{a}{3}z_1z_2 + \frac{b}{3} (z_1+z_2)+c \Big) \\
 &=: P_1(z_1,z_2)z_3 + P_2(z_1,z_2). 
 \end{align*}

The following two points will be of crucial importance. Let 
 \begin{align*}
u^* := -\frac{\alpha_1\alpha_2e_3+\alpha_1\alpha_3e_2+\alpha_2\alpha_3e_1}{\alpha_1e_1+\alpha_2e_2+\alpha_3e_3}
\mbox{ and }
v^*:= -\frac{\alpha_1\alpha_2e_3+\alpha_1\alpha_3e_1+\alpha_2\alpha_3e_2}{\alpha_1e_2+\alpha_2e_1+\alpha_3e_3}.
\end{align*}
Note that both $u^*$ and $v^*$ are finite if the zeros $\alpha_1, \alpha_2,$ and $\alpha_3$ are not vertices of an equilateral triangle. Otherwise, exactly one of of them is infinity. The points $u^*, v^*$ are distinct, since
\begin{align*}
u^* - v^* = -i4\sqrt{3}\frac{(\alpha_1-\alpha_2)(\alpha_1-\alpha_3)(\alpha_2-\alpha_3)}{(\alpha_1e_1+\alpha_2e_2+\alpha_3e_3)(\alpha_1e_2+\alpha_2e_1+\alpha_3e_3)} \not= 0.
\end{align*}

If the points $u^*, v^*$ are finite, we have the following representation, which is straightforward to verify directly
\begin{align}
\label{repres-Pu^*v^*}
P(z_1, z_2, u^*) &= \frac{1}{3} \frac{(\alpha_1e_2+\alpha_2e_1+\alpha_3e_3)^2}{(\alpha_1e_1+\alpha_2e_2+\alpha_3e_3)} (z_1-v^*)(z_2-v^*), \\
\label{repres-Pv^*u^*}
P(z_1, z_2, v^*) &= \frac{1}{3} \frac{(\alpha_1e_1+\alpha_2e_2+\alpha_3e_3)^2}{(\alpha_1e_2+\alpha_2e_1+\alpha_3e_3)} (z_1-u^*)(z_2-u^*).
\end{align}


A byproduct of the proof of Lemma~\ref{lem-20140227} is the expressions
\begin{align}
\label{repru^*v^*}
\{u^*, v^*\} = \Big\{\frac{(9c-ab)+i\sqrt{3} \sqrt{\Delta}}{2(a^2-3b)}, \frac{(9c-ab)-i\sqrt{3} \sqrt{\Delta}}{2(a^2-3b)}\Big\},
\end{align}
whenever $\alpha_1$, $\alpha_2$, and $\alpha_3$ are not vertices of an equilateral triangle,
where $\Delta$ is the {\it discriminant} of the polynomial $p(z)$:
\begin{align*}
\Delta := a^2b^2-4b^3-4a^3c-27c^2+18cab = (\alpha_1-\alpha_2)^2(\alpha_1-\alpha_3)^2(\alpha_2-\alpha_3)^2.
\end{align*}  

In the case when $\alpha_1$, $\alpha_2$, and $\alpha_3$ are vertices of an equilateral triangle, we have the following lemma. Since its proof is a direct computation, it is omitted.

\begin{lemma}
If $\alpha_1$, $\alpha_2$, and $\alpha_3$ are vertices of an equilateral triangle, then
\begin{align}
\label{maybe-last}
3(9c-ab) =  27c - a^3 \not= 0,
\end{align}
and
\begin{align}
\label{maybe-last-2}
-\frac{a}{3} = \frac{b^2-3ac}{9c-ab} = 
\left\{
\begin{array}{ll}
v^* & \mbox{if }  \alpha_1e_1+\alpha_2e_2+\alpha_3e_3=0,  \\
u^* & \mbox{if }  \alpha_1e_2+\alpha_2e_1+\alpha_3e_3=0.
\end{array}
\right.
\end{align}

\end{lemma}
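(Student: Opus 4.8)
The plan is to translate the geometric hypothesis into the single coefficient relation $a^2=3b$ and then dispatch each assertion by elementary algebra, reserving the identification with $u^*$ or $v^*$ for a polynomial identity at the end. First I would record the dictionary between coefficients and roots: Vieta's formulas give $a=-(\alpha_1+\alpha_2+\alpha_3)$, $b=\alpha_1\alpha_2+\alpha_1\alpha_3+\alpha_2\alpha_3$, and $c=-\alpha_1\alpha_2\alpha_3$, so that the equilateral condition (\ref{rel-1}) of Lemma~\ref{lem-1.1} becomes exactly $a^2=3b$. Substituting $b=a^2/3$ into $3(9c-ab)$ yields $27c-3ab=27c-a^3$, which is the first equality in (\ref{maybe-last}). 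For the non-vanishing I would argue by contradiction: if in addition $27c-a^3=0$, then $c=a^3/27$, and the relations force $p(z)=z^3+az^2+\tfrac{a^2}{3}z+\tfrac{a^3}{27}=(z+\tfrac{a}{3})^3$, a polynomial with a triple root, contradicting the standing assumption that $\alpha_1,\alpha_2,\alpha_3$ are distinct. Hence $27c-a^3\neq 0$, equivalently $9c-ab\neq 0$.

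For the middle equality in (\ref{maybe-last-2}), since $9c-ab\neq 0$ I can clear the denominator: the claim $-\tfrac{a}{3}=\tfrac{b^2-3ac}{9c-ab}$ is equivalent to $-\tfrac{a}{3}(9c-ab)=b^2-3ac$, that is, to $\tfrac{a^2b}{3}=b^2$, or $a^2b=3b^2$; and this is immediate from $a^2=3b$.

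The remaining step, identifying $-a/3$ with $v^*$ (respectively $u^*$), is the only one requiring the special arithmetic of $e_1,e_2,e_3$, and I expect it to be the main obstacle, though it is bookkeeping rather than genuine difficulty. The key is the polynomial identity, valid for all $\alpha_1,\alpha_2,\alpha_3$,
\begin{align*}
& (\alpha_1+\alpha_2+\alpha_3)(\alpha_1 e_2+\alpha_2 e_1+\alpha_3 e_3) + 3(\alpha_1\alpha_2 e_3+\alpha_1\alpha_3 e_1+\alpha_2\alpha_3 e_2) \\
& \qquad = -(\alpha_1 e_1+\alpha_2 e_2+\alpha_3 e_3)^2,
\end{align*}
which one verifies by expanding both sides and using $e_1+e_2+e_3=0$ and $e_1e_2e_3=-1$ together with $e_3=-1$ (so that $e_1e_2=1$, $e_1^2=-e_2$, and $e_2^2=-e_1$); the cleanest route is to compute the square on the right and watch the terms match, rather than to regroup the product on the left. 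Under the hypothesis $\alpha_1 e_1+\alpha_2 e_2+\alpha_3 e_3=0$ the right-hand side vanishes, and since Lemma~\ref{lem-1.1} guarantees that then $\alpha_1 e_2+\alpha_2 e_1+\alpha_3 e_3\neq 0$ (the two alternatives in (\ref{ifequitriag}) cannot hold simultaneously), dividing by this nonzero denominator gives $v^*=-\tfrac{\alpha_1\alpha_2 e_3+\alpha_1\alpha_3 e_1+\alpha_2\alpha_3 e_2}{\alpha_1 e_2+\alpha_2 e_1+\alpha_3 e_3}=\tfrac{\alpha_1+\alpha_2+\alpha_3}{3}=-\tfrac{a}{3}$. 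Interchanging $e_1$ and $e_2$ throughout produces the companion identity and, in the case $\alpha_1 e_2+\alpha_2 e_1+\alpha_3 e_3=0$, the equality $u^*=-a/3$, which completes the proof.
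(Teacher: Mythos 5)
Your proof is correct: the reduction of the equilateral condition to $a^2=3b$ handles (\ref{maybe-last}) and the middle equality of (\ref{maybe-last-2}) cleanly, and I have checked that your polynomial identity $(\alpha_1+\alpha_2+\alpha_3)(\alpha_1e_2+\alpha_2e_1+\alpha_3e_3)+3(\alpha_1\alpha_2e_3+\alpha_1\alpha_3e_1+\alpha_2\alpha_3e_2)=-(\alpha_1e_1+\alpha_2e_2+\alpha_3e_3)^2$ does hold (using $e_1+e_2=1$, $e_2+e_3=-e_1$, $e_1+e_3=-e_2$, $e_1^2=-e_2$, $e_2^2=-e_1$), with the $e_1\leftrightarrow e_2$ swap giving the other case. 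The paper omits this proof as ``a direct computation,'' and your argument is exactly that computation, so the approaches coincide.
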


\section{The bi-affine, symmetric, rational transformation}

The {\it bi-affine, symmetric, rational transformation} is the map  
 $$
 F: \CC^* \times \CC^* \rightarrow \CC^*,
 $$
obtained by solving
$P(z_1,z_2,z_3)=0$ for, say $z_3$:
\begin{align*}
F(z_1, z_2) : = - \frac{az_1z_2+b(z_1+z_2)+3c}{3z_1z_2+ a(z_1+z_2)+b}.
\end{align*}
Note that $F(z_1, z_2)$ generalizes the symmetric M\"obius transformations (that is, the M\"obius {\it involutions}).

By Lemma~\ref{2014-04-14-b}, $F(z_1, z_2)$ is well-defined whenever $(z_1, z_2) \not\in \{(u^*, v^*), (v^*, u^*)\}$, if $\alpha_1$, $\alpha_2$, $\alpha_3$ are not vertices of an equilateral triangle. If $\alpha_1$, $\alpha_2$, $\alpha_3$ are vertices of an equilateral triangle, then $F(z_1, z_2)$ is well-defined everywhere on 
$\CC^* \times \CC^*$. in particular, we have
$$
F(z_1, \infty) = - \frac{az_1+b}{3z_1+ a} \mbox{ and } T(\infty, \infty) = -\frac{a}{3}. 
$$
For a fixed $z_3$, the map $z \mapsto F(z,z_3)$, is a symmetric M\"obius transformation 
\begin{align*}
F(z, z_3) = - \frac{z(az_3+b) +(bz_3+3c)}{z(3z_3+ a)+ (az_3+b)}.
\end{align*}

\begin{lemma}
\label{2014-04-14-a}
The M\"obius transformation $z \mapsto F(z,z_3)$ is non-degenerate if and only if $z_3 \not\in \{u^*, v^*\}$. Otherwise, we have
\begin{align}
\label{deg-F}
F(z_1,u^*) = v^* \quad \mbox{and} \quad F(z_1,v^*) = u^*
\end{align}
for all $z_1 \in \CC$.
\end{lemma}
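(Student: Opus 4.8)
The plan is to treat this as a statement about when the M\"obius map $z\mapsto F(z,z_3)=(Az+B)/(Cz+D)$ is non-degenerate, for which the classical criterion is $AD-BC\neq 0$ (when this discriminant vanishes the map collapses to a constant). From the displayed form of $F(z,z_3)$ I read off $A=-(az_3+b)$, $B=-(bz_3+3c)$, $C=3z_3+a$, $D=az_3+b$, and set
\begin{align*}
Q(z_3) := AD - BC = -(az_3+b)^2 + (bz_3+3c)(3z_3+a) = (3b-a^2)z_3^2 + (9c-ab)z_3 + (3ac-b^2).
\end{align*}
Non-degeneracy of $z\mapsto F(z,z_3)$ is then equivalent to $Q(z_3)\neq 0$, so the task reduces to locating the zeros of the quadratic $Q$.

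Rather than factor $Q$ by hand, I would first dispatch the degenerate values using the already-verified factored representations. By definition $F(z_1,u^*)$ is the value $w$ solving $P(z_1,u^*,w)=0$, and since $P$ is fully symmetric, $P(z_1,u^*,w)=P(z_1,w,u^*)$, which by (\ref{repres-Pu^*v^*}) equals a nonzero constant times $(z_1-v^*)(w-v^*)$. For $z_1\neq v^*$ this vanishes iff $w=v^*$, giving $F(z_1,u^*)=v^*$; since $z\mapsto F(z,u^*)$ is constant, the value is $v^*$ for every $z_1$. The identity $F(z_1,v^*)=u^*$ follows in the same way from (\ref{repres-Pv^*u^*}). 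This proves the ``Otherwise'' part of the statement.

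These two facts also pin down the zeros of $Q$: because $z\mapsto F(z,u^*)$ and $z\mapsto F(z,v^*)$ are constant, their discriminants vanish, i.e. $Q(u^*)=Q(v^*)=0$. In the non-equilateral case the leading coefficient $3b-a^2$ is nonzero (by Lemma~\ref{lem-1.1}, $a^2-3b=0$ characterizes the equilateral configuration), so $Q$ is a genuine quadratic; as $u^*$ and $v^*$ are distinct, they are its only roots. Hence $Q(z_3)\neq 0$ exactly when $z_3\notin\{u^*,v^*\}$, which is the non-degeneracy claim.

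The one place requiring care is the equilateral case, where one of $u^*,v^*$ is $\infty$ and the representations (\ref{repres-Pu^*v^*})--(\ref{repres-Pv^*u^*}) no longer apply verbatim; there I would instead argue directly with the boundary forms $F(z_1,\infty)$ and $F(\infty,\infty)$ recorded earlier, or alternatively verify the zeros of $Q$ against the explicit expressions (\ref{repru^*v^*}) using the identity $(9c-ab)^2-4(a^2-3b)(b^2-3ac)=-3\Delta$. I expect the bookkeeping at infinity, rather than any substantial computation, to be the main obstacle.
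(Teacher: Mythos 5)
Your proposal is correct and follows essentially the same route as the paper: both reduce non-degeneracy to the vanishing of the discriminant $(a^2-3b)z_3^2+(ab-9c)z_3+b^2-3ac$ of the M\"obius map, and both extract the degenerate values from the factored representations (\ref{repres-Pu^*v^*})--(\ref{repres-Pv^*u^*}) (the paper routes this step through Lemma~\ref{2014-04-14-b}). The only real difference is the direction of inference: the paper verifies via the explicit root formula (\ref{repru^*v^*}) that $u^*,v^*$ are the zeros of that quadratic, whereas you first establish $F(\cdot,u^*)\equiv v^*$ and $F(\cdot,v^*)\equiv u^*$ and then deduce $Q(u^*)=Q(v^*)=0$, which together with $u^*\neq v^*$ and the nonvanishing leading coefficient pins down the zeros without invoking (\ref{repru^*v^*}) --- a slightly cleaner bookkeeping --- and the equilateral case you defer is dispatched in the paper with exactly the tools you name, namely (\ref{maybe-last}) and (\ref{maybe-last-2}).
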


\begin{proof}
The discriminant of  $F(\cdot, z_3)$ is
$$
(a^2-3b)z_3^2+(ab-9c)z_3 +b^2-3ac.
$$
If $\alpha_1$, $\alpha_2$, $\alpha_3$ are not vertices of an equilateral triangle, that is $a^2-3b \not = 0$, then
using (\ref{repru^*v^*}), it is easy to check that $u^*, v^*$ are the zeros of that quadratic function.
If $a^2-3b = 0$, then the only zero is either $u^*$ or $v^*$, as seen using (\ref{maybe-last-2}).

In the case  $a^2-3b \not = 0$, relationships (\ref{deg-F}) follow from Lemma~\ref{2014-04-14-b}:
$$
P_1(z_1, u^*)v^*+P_2(z_1,u^*) = P(z_1, u^*, v^*) = P_1(u^*, v^*)z_1+P_2(u^*, v^*) = 0.
$$
In the case  $a^2-3b = 0$, one needs to use (\ref{maybe-last-2}) in order to see that
(\ref{deg-F}) holds.
\end{proof}

Consider the M\"obius transformation
\begin{align}
\label{defn-W(z)}
W(z) := - \frac{z(\alpha_1\alpha_2 e_3 + \alpha_1\alpha_3 e_2 + \alpha_2\alpha_3 e_1)+(\alpha_1\alpha_2 e_3 + \alpha_1\alpha_3 e_1 + \alpha_2\alpha_3 e_2)}{z(\alpha_1 e_1 + \alpha_2 e_2 + \alpha_3 e_3) + (\alpha_1 e_2 + \alpha_2 e_1 + \alpha_3 e_3) }.
\end{align}
The M\"obius transformation (\ref{defn-W(z)}) is precisely the one defined by the equations $W(e_k) = \alpha_k$ for $k=1,2,3.$
It is non-degenerate, since the zeros $\alpha_1, \alpha_2$, and $\alpha_3$ are distinct. 
One of the goals of this note is to clarify the relationship between the solutions of the polarization of $p(z)$ and those of the polarization of $z^3+1$. This is done in Theorem~\ref{non-deg-corr}.

The inverse transformation of $W(z)$ is
$$
W^{-1}(z) = -\frac{z(\alpha_1e_2+\alpha_2e_1+\alpha_3e_3)+(\alpha_1\alpha_2e_3+\alpha_1\alpha_3e_1+\alpha_2\alpha_3e_2)}{z(\alpha_1e_1+\alpha_2e_2+\alpha_3e_3)+(\alpha_1\alpha_2e_3+\alpha_1\alpha_3e_2+\alpha_2\alpha_3e_1)},
$$
so $u^*, v^*$ are precisely the points satisfying 
\begin{align}
\label{defnu^*v^*}
W^{-1}(u^*)=\infty \mbox{ and } W^{-1}(v^*)=0,
\end{align}
thus we have
\begin{align}
\label{Winv-1}
W^{-1}(z) = -\frac{\alpha_1e_2+\alpha_2e_1+\alpha_3e_3}{\alpha_1e_1+\alpha_2e_2+\alpha_3e_3} \Big(\frac{z-v^*}{z-u^*}\Big).
\end{align}

The following result, in essence, appears in \cite{SS:2012}. Since we introduce a few essential changes, and since it is of great importance to us,  we include the proof in Section~\ref{sect-proofThm}.

\begin{theorem}
\label{thm-reduction}
For any $(u_1, u_2) \in \CC^* \times \CC^* \setminus \{(0, \infty),(\infty, 0)\}$ we have
\begin{align}
\label{quadr-ident-31}
F(W(u_1), W(u_2)) = W\Big(-\frac{1}{u_1u_2} \Big).
\end{align}
\end{theorem}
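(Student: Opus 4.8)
The plan is to verify the identity (\ref{quadr-ident-31}) by reducing it to the statement that a single Möbius transformation agrees with a prescribed one at three points. The key observation is that both sides of (\ref{quadr-ident-31}), viewed as functions of $u_1$ for a fixed $u_2$, are Möbius transformations of $u_1$, provided the inner maps behave nicely. Indeed, $W$ is Möbius, $-1/(u_1 u_2)$ is Möbius in $u_1$, and $F(\cdot, W(u_2))$ is a (symmetric) Möbius transformation by the discussion preceding Lemma~\ref{2014-04-14-a}; the composition of Möbius maps is Möbius. So for fixed $u_2$, both sides are Möbius in $u_1$, and to prove they coincide it suffices to check agreement at three distinct values of $u_1$.

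\textbf{Choosing the three check-points.} The natural three points are $u_1 = e_1, e_2, e_3$, because $W(e_k) = \alpha_k$ by the defining property of $W$, and $-1/(e_k u_2)$ will again be expressible through the $e_j$'s using $e_k^2 = \overline{e_k}$-type relations among the cube roots of $-1$. First I would fix $u_2$ and compute the right-hand side $W(-1/(e_k u_2))$ and the left-hand side $F(\alpha_k, W(u_2))$ at each $u_1 = e_k$. For the left-hand side I would use the representation $F(z, z_3) = -(P_2(z,z_3))/(P_1(z,z_3))$ together with the factored forms (\ref{repres-Pu^*v^*}) and (\ref{repres-Pv^*u^*}) when the argument $W(u_2)$ lands at a special value, and more generally exploit that $P(\alpha_k, \cdot, \cdot)$ has $\alpha_k$ as a repeated-type zero pattern inherited from $p(\alpha_k) = 0$.

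\textbf{The cleanest route.} Rather than three separate numeric checks, I would instead conjugate everything by $W^{-1}$ and reduce to the model polynomial $z^3 + 1$. Using the explicit inverse $W^{-1}$ and its factored form (\ref{Winv-1}), together with (\ref{defnu^*v^*}) stating $W^{-1}(u^*) = \infty$ and $W^{-1}(v^*) = 0$, the claim (\ref{quadr-ident-31}) should transform into the corresponding bi-affine identity for the polarization of $z^3 + 1$, namely that its associated transformation sends $(u_1, u_2) \mapsto -1/(u_1 u_2)$. For $z^3 + 1$ we have $a = b = 0$ and $c = 1$, so the polarization is simply $z_1 z_2 z_3 + 1$ and solving for $z_3$ gives exactly $-1/(z_1 z_2)$. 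Thus the identity becomes transparent in the model coordinates, and the content of the theorem is precisely that $W$ intertwines $F$ with the model transformation.

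\textbf{Anticipated obstacle.} The main difficulty is the bookkeeping when one of $u^*, v^*$ is infinite (the equilateral case) or when $(u_1, u_2)$ approaches the excluded set $\{(0,\infty),(\infty,0)\}$, where the Möbius maps degenerate; Lemma~\ref{2014-04-14-a} handles the degeneration of $F(\cdot, z_3)$ at $z_3 \in \{u^*, v^*\}$, and I expect those to correspond exactly to $u_2 \in \{0, \infty\}$ under $W$, so the excluded points are precisely where $-1/(u_1 u_2)$ is ill-defined. Verifying that the two sides agree as maps into $\CC^*$ (including the point at infinity) rather than merely on $\CC$, and that the exceptional cases match up on both sides, is where care is needed; the algebraic identity itself, once reduced via $W^{-1}$ to the $z^3+1$ model, is routine.
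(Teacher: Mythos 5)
There is a genuine gap, and it sits exactly where the content of the theorem lies. Your ``cleanest route'' is circular: the assertion that conjugating by $W^{-1}$ ``transforms the claim into the corresponding bi-affine identity for the polarization of $z^3+1$'' \emph{is} the identity (\ref{quadr-ident-31}) --- nothing in the paragraph shows that $u\mapsto W^{-1}\big(F(W(u_1),W(u_2))\big)$ equals $-1/(u_1u_2)$. To make that route honest you would need (i) a covariance lemma describing how the polarization of $p$ transforms under a M\"obius substitution applied to each variable, and (ii) the identification of the transformed polynomial $D(z)^3\,p(W(z))$ (where $W=N/D$) as a nonzero constant multiple of $z^3+1$. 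Neither is stated or proved in the proposal, and together they amount to essentially the computation the paper actually performs in Appendix~B, which writes $W=N/D$, $F=F_1/F_2$, and factors $F_i\big(N(z_1)/D(z_1),N(z_2)/D(z_2)\big)D(z_1)D(z_2)$ using the identities $N(z)-\alpha_k D(z)=(ze_i+e_j)(\alpha_k-\alpha_l)(\alpha_k-\alpha_m)$.

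Your first route --- the three-point check --- is sound in principle and genuinely different from (and shorter than) the paper's brute-force expansion, but you stop precisely at the step that carries the arithmetic content. For fixed $u_2\notin\{0,\infty\}$, both sides are indeed non-degenerate M\"obius maps of $u_1$ (by Lemma~\ref{2014-04-14-a} together with (\ref{defnu^*v^*}), since $W(u_2)\notin\{u^*,v^*\}$), so it suffices to verify $F(\alpha_k,W(u_2))=W(-1/(e_ku_2))$ for $k=1,2,3$; but you never verify this, and the remark that $P(\alpha_k,\cdot,\cdot)$ has ``a repeated-type zero pattern inherited from $p(\alpha_k)=0$'' does not correspond to any actual factorization of $P(\alpha_k,z,w)$. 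The natural completion is a second three-point check, in $u_2$ at $e_1,e_2,e_3$: since $-1/(e_ke_j)=e_m$ with $m$ the remaining index when $k\neq j$ and $m=k$ when $k=j$, everything reduces to $P(\alpha_1,\alpha_2,\alpha_3)=0$ and $P(\alpha_k,\alpha_k,\alpha_k)=p(\alpha_k)=0$, plus the checks that the denominators $P_1(\alpha_k,\alpha_j)$ do not vanish (they do not, because $W^{-1}(\alpha_k)=e_k\notin\{0,\infty\}$ forces $\alpha_k\notin\{u^*,v^*\}$) and the separate treatment of $u_2\in\{0,\infty\}$ via (\ref{deg-F}). As submitted, the proposal names a viable strategy but omits the verification that would make it a proof.
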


In light of Lemma~\ref{2014-04-14-a}, we make the following definition.

\begin{definition} \rm
\label{defn-non-deg}
Solutions of $P(z_1, z_2, z_3)=0$ such that $z_k \not\in \{u^*, v^*\}$ for all $k=1,2,3$ are called {\it non-degenerate}.
\end{definition}

Representations (\ref{repres-Pu^*v^*}) and (\ref{repres-Pv^*u^*}) show that if one of the components of a solution of $P$ is equal to 
$u^*$, then another has to be equal to $v^*$, and the third one is free. That is, the degenerate solutions of $P$ are
$\{(u^*, v^*, z) : z \in \CC\}$.

\begin{theorem}
\label{non-deg-corr}
The non-degenerate solutions  of 
$P(z_1, z_2, z_3)=0$ are in one-to-one correspondence with the solutions of 
\begin{align}
\label{requced-eqn}
\begin{cases}
u_1u_2u_3&=-1, \\
u_3 &\not=  - \frac{\alpha_1e_2 + \alpha_2e_1 + \alpha_3e_3}{\alpha_1e_1 + \alpha_2e_2 + \alpha_3e_3}.
\end{cases}
\end{align}
 via $u_k = W^{-1}(z_k)$ for $k=1,2,3$. 
\end{theorem}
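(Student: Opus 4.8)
The plan is to realize the correspondence as the coordinatewise Möbius map $(z_1,z_2,z_3) \mapsto (u_1,u_2,u_3)$ with $u_k = W^{-1}(z_k)$, and to reduce the entire verification to the identity (\ref{quadr-ident-31}) of Theorem~\ref{thm-reduction}. Since $W$ is a non-degenerate Möbius transformation, $W^{-1}$ is a bijection of $\CC^*$, so the coordinatewise map is automatically a bijection of $\CC^*\times\CC^*\times\CC^*$ onto itself; the only real content is to check that it carries the non-degenerate solutions of $P=0$ exactly onto the solution set of (\ref{requced-eqn}), and back.

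First I would record the dictionary between the distinguished points. By (\ref{defnu^*v^*}) we have $W^{-1}(u^*)=\infty$ and $W^{-1}(v^*)=0$, and by (\ref{Winv-1}) we have $W^{-1}(\infty) = -\frac{\alpha_1e_2+\alpha_2e_1+\alpha_3e_3}{\alpha_1e_1+\alpha_2e_2+\alpha_3e_3}$, which is precisely the forbidden value in (\ref{requced-eqn}). Hence, coordinatewise, $z_k \neq u^*$ iff $u_k \neq \infty$; $z_k \neq v^*$ iff $u_k \neq 0$; and $z_k \neq \infty$ iff $u_k \neq W^{-1}(\infty)$. Thus non-degeneracy ($z_k \notin\{u^*,v^*\}$ for all $k$) corresponds to $u_k \notin\{0,\infty\}$, while the constraint $u_3 \neq W^{-1}(\infty)$ is nothing but the requirement that the solved-for coordinate $z_3$ be finite.

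For the forward direction, take a non-degenerate solution $(z_1,z_2,z_3)$. Because $z_1,z_2\notin\{u^*,v^*\}$, the pair $(z_1,z_2)$ avoids $\{(u^*,v^*),(v^*,u^*)\}$, so by Lemma~\ref{2014-04-14-a} the transformation $F$ is well-defined there and solving $P=0$ for $z_3$ gives $z_3 = F(z_1,z_2)$. Writing $z_1 = W(u_1)$, $z_2 = W(u_2)$ with $u_1,u_2 \notin\{0,\infty\}$ (so in particular $(u_1,u_2)\notin\{(0,\infty),(\infty,0)\}$), Theorem~\ref{thm-reduction} yields $z_3 = F(W(u_1),W(u_2)) = W\!\left(-\frac{1}{u_1u_2}\right)$, whence $u_3 = W^{-1}(z_3) = -\frac{1}{u_1u_2}$, i.e. $u_1u_2u_3=-1$; finiteness of $z_3$ gives $u_3 \neq W^{-1}(\infty)$. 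Conversely, given $(u_1,u_2,u_3)$ solving (\ref{requced-eqn}), the equation $u_1u_2u_3=-1$ forces all three $u_k$ to be finite and nonzero (on $\CC^*$ a factor $0$ or $\infty$ cannot produce the product $-1$); setting $z_k = W(u_k)$ then gives $z_k\notin\{u^*,v^*\}$, while $u_3\neq W^{-1}(\infty)$ gives $z_3\neq\infty$. Applying Theorem~\ref{thm-reduction} once more, $F(z_1,z_2)=W\!\left(-\frac{1}{u_1u_2}\right)=W(u_3)=z_3$, so $P(z_1,z_2,z_3)=0$ and the solution is non-degenerate. As the two assignments are mutually inverse, this establishes the one-to-one correspondence.

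The main obstacle is the bookkeeping at the three boundary values $0$, $\infty$, and $W^{-1}(\infty)$. I must ensure that $F$ is genuinely well-defined at every point used (via Lemma~\ref{2014-04-14-a} and the description of the degenerate locus $\{(u^*,v^*,z):z\in\CC\}$ that follows (\ref{repres-Pv^*u^*})), that the pairs $\{(0,\infty),(\infty,0)\}$ excluded in the hypothesis of Theorem~\ref{thm-reduction} never arise from non-degenerate data, and---most delicately---that the asymmetry of (\ref{requced-eqn}), which restricts only $u_3$, is consistent with the convention that $F:\CC^*\times\CC^*\to\CC^*$ solves for the third coordinate: the inputs $z_1,z_2$ range over $\CC^*$ while the output $z_3$ is the value kept finite precisely by $u_3\neq W^{-1}(\infty)$. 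Finally, the equilateral case, in which one of $u^*,v^*$ equals $\infty$, should be checked separately to confirm that the dictionary $W^{-1}(u^*)=\infty$, $W^{-1}(v^*)=0$ and the identification of $W^{-1}(\infty)$ still apply.
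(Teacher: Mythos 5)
Your proposal is correct and follows essentially the same route as the paper: both directions are reduced to the identity of Theorem~\ref{thm-reduction}, with the dictionary $W^{-1}(u^*)=\infty$, $W^{-1}(v^*)=0$ from (\ref{defnu^*v^*}) translating non-degeneracy into $u_k\notin\{0,\infty\}$ and the forbidden value of $u_3$ into finiteness of $z_3=F(z_1,z_2)$. Your version merely spells out a few boundary checks (the excluded pairs in Theorem~\ref{thm-reduction}, the equilateral case) that the paper leaves implicit.
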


\begin{proof}
Let $(z_1, z_2, z_3)$ be a non-degenerate solution of $P$. By (\ref{defnu^*v^*}) we see that $u_k := W^{-1}(z_k) \not\in \{0,\infty\}$ for $k=1,2,3$. 
Let $z_3:=F(z_1,z_2)$ and apply $W^{-1}$ to both sides. By (\ref{quadr-ident-31}) we get
$u_1u_2u_3=-1$.

Suppose now $u_1u_2u_3=-1$, and let $z_k := W(u_k)$  for $k=1,2,3$. By (\ref{defnu^*v^*}) we get 
$$
F(z_1,z_2) = W\Big(-\frac{1}{u_1u_2} \Big). 
$$
Both sides of this equality are not $\infty$, or else by (\ref{defn-W(z)}) we obtain
$$
u_3  = -\frac{1}{u_1u_2} =  - \frac{\alpha_1e_2 + \alpha_2e_1 + \alpha_3e_3}{\alpha_1e_1 + \alpha_2e_2 + \alpha_3e_3},
$$
which is a contradiction. Thus, letting $z_3:=F(z_1,z_2)$ defines a solution of $P$.
\end{proof}

Solutions of $P$ of the kind $(z,z,w)$, containing at most two distinct components, are called {\it bi-solutions}. 

Fix a non-degenerate solution $(z_1,z_2,z_3)$ of $P$. For each $k=1,2,3$, let $f_{k,1}, f_{k,2}$ be the zeros
of $z \mapsto P(z,z,z_k)$. 
By the comment after Definition~\ref{defn-non-deg}, the bi-solution $(f_{k,i}, f_{k,i}, z_k)$ is 
also non-degenerate, for $k=1,2,3$, $i=1,2$. At the moment it is not clear exactly which zero of $z \mapsto P(z,z,z_k)$
is denoted by $f_{k,1}$ and which one by  $f_{k,2}$. To fix the order we need a clarification.

The argument of a complex number is always understood to be in $(-\pi, \pi]$, that is, we consider only its principle value. Since, for complex numbers, the equality $(z_1z_2)^\alpha = z_1^\alpha z_2^\alpha$ holds only as equality between sets, we make the following agreement. By $\sqrt{z_1z_2}$ we understand:
$$
\sqrt{z_1z_2} := \sqrt{z_1} \sqrt{z_2}.
$$

Let $u_k := W^{-1}(z_k)$ and let $u_{k,i}:=W^{-1}(f_{k,i})$  for $k=1,2,3$, $i=1,2$. By Theorem~\ref{non-deg-corr}, $(u_1, u_2, u_3)$ is a solution and  $(u_{k,i}, u_{k,i}, u_k)$ is a bi-solution of (\ref{requced-eqn}) for $k=1,2,3$, $i=1,2$. Thus, 
$$
u_{k,i}^2 = - \frac{1}{u_k} = u_\ell u_j, \quad \mbox{ where $\{\ell,j,k\} = \{1,2,3\}$}. 
$$
Now, for indices $\{\ell,j,k\} = \{1,2,3\}$, we define
$$
u_{k,1} := \sqrt{u_\ell u_j} \quad \mbox{and} \quad u_{k,2} := -\sqrt{u_\ell u_j}
$$
and consequently
\begin{align}
\label{order}
f_{k,i} := W(u_{k,i}) \quad \mbox{for $k=1,2,3$, $i=1,2$.}
\end{align}
With this notation, we have the following geometric surprise. 

\begin{theorem}
\label{main-thm-2}
Let  $(z_1,z_2,z_3)$ be any non-degenerate solution of $P$. 
Let $f_{k,1}$ and $f_{k,2}$ be the zeros
of $z \mapsto P(z,z,z_k)$ ordered as described in (\ref{order}), for $k=1,2,3$.
Define the following seven circles and a conic:
\begin{description}
\item[$C_1$] the circle determined by $f_{1,1}, z_2, z_3$;
\item[$C_2$] the circle determined by $z_1,f_{2,1}, z_3$;
\item[$C_3$] the circle determined by $z_1,z_2, f_{3,1}$;
\item[$C_4$] the circle determined by $f_{1,2}, f_{2,2}, f_{3,2}$;
\item[$C_5$] the circle determined by $f_{1,1}, f_{2,1}, f_{3,2}$;
\item[$C_6$] the circle determined by $f_{1,1}, f_{2,2}, f_{3,1}$;
\item[$C_7$] the circle determined by $f_{1,2}, f_{2,1}, f_{3,1}$;
\end{description}
These seven circles have a common intersection point.  Moreover, let 
\begin{description}
\item[$N$] be the conic determined by the points $\{W^{-1}(f_{k,i}) : k=1,2,3, i=1,2 \}$.
\end{description}
Then, the curve $W(N)$ also passes through the common intersection point of the seven circles.
\end{theorem}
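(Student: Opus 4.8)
The plan is to push the whole configuration through the Möbius map $W^{-1}$ and then split the claim into two purely geometric concurrence statements plus one short conic argument. Since $W$ is a non-degenerate Möbius transformation it carries circles and lines of $\CC^*$ to circles and lines and preserves concyclicity, and it sends $N$ to $W(N)$; hence it suffices to prove that the seven images $D_k:=W^{-1}(C_k)$ and the conic $N$ pass through a common point $\omega'$ in the $u$-plane, after which $W(\omega')$ is the asserted point. Writing $p:=u_{1,1},\ q:=u_{2,1},\ r:=u_{3,1}$, the six points $W^{-1}(f_{k,i})$ are exactly $\pm p,\pm q,\pm r$, while $u_k=W^{-1}(z_k)$ satisfies $u_1=-1/p^2,\ u_2=-1/q^2,\ u_3=-1/r^2$ and $pqr=\pm1$ by Theorem~\ref{non-deg-corr}. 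In these coordinates $D_4,D_5,D_6,D_7$ are the circles through the sign-triples $(-,-,-),(+,+,-),(+,-,+),(-,+,+)$ of $\{\pm p,\pm q,\pm r\}$, and $D_1,D_2,D_3$ are the ``mixed'' circles through $p,u_2,u_3$; through $u_1,q,u_3$; and through $u_1,u_2,r$.

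First I would prove a statement valid for \emph{arbitrary} $p,q,r$ (not using $pqr=\pm1$): the four circles $D_4,\dots,D_7$ share a common point $\omega'$. Inverting at a vertex shared by two of them (for instance $z\mapsto 1/(z-p)$, which turns $D_5,D_6$ into lines and $D_7$ into a circle) reduces this to checking that the intersection of two explicit lines lies on one explicit circle, a single real-linear incidence that holds for all $p,q,r$; that $D_4$ passes through the same point then follows from the evident $z\mapsto-z$ symmetry of the four-circle family. This defines $\omega'$ and yields concurrence of $C_4,\dots,C_7$.

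The conic incidence is then the clean part. For each $k$ let $D_k'$ be the reflection $z\mapsto-z$ of $D_k$, and write the defining polynomial of $D_k$ as $z\bar z+B_kz+\bar B_k\bar z+C_k$. Set $M_k:=D_kD_k'-(z\bar z)^2$; a one-line computation gives $M_k=-B_k^2z^2-\bar B_k^2\bar z^2+(2C_k-2|B_k|^2)z\bar z+C_k^2$, a \emph{central} conic. Because the six points $\pm p,\pm q,\pm r$ lie three on $D_k$ and three on $D_k'$, each annihilates $D_kD_k'$, so $M_k$ equals $-|z|^4$ at all six; the same holds at $\omega'$ because $\omega'\in D_k$. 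Hence $M_5-M_6$ is a central conic vanishing at all six of $\pm p,\pm q,\pm r$, so (these points lying on the unique conic $N$) it is a nonzero multiple $\lambda N$. Evaluating at $\omega'$ gives $\lambda\,N(\omega')=M_5(\omega')-M_6(\omega')=0$, whence $\omega'\in N$, which is exactly the claim that $W(N)$ passes through the common point. Degenerate positions (four of the six points concyclic or collinear, so that $N$ splits into two lines) are handled by continuity or directly.

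It remains to show $D_1,D_2,D_3$ pass through $\omega'$, and here $pqr=\pm1$ enters. Applying $z\mapsto-1/z$ and using $u_1=-1/p^2$, $u_2=-1/q^2$, $u_3=-1/r^2$ converts $D_1,D_2,D_3$ into the circles through $\{q^2,r^2,\pm qr\},\{r^2,p^2,\pm rp\},\{p^2,q^2,\pm pq\}$ (the sign governed by $\sigma:=pqr$), a Miquel-type family on the triangle $p^2,q^2,r^2$ that is concurrent for all $p,q,r$; one then checks their common point is $-1/\omega'$, i.e. $\omega'\in D_1\cap D_2\cap D_3$. Assembling the three steps and applying $W$ proves the theorem. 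The main obstacle is precisely this last matching: one must pin down the branch convention $\sqrt{z_1z_2}=\sqrt{z_1}\sqrt{z_2}$ and the sign $\sigma\in\{\pm1\}$ finely enough to know which $f$-point pairs with which and whether the extra vertices are $+qr$ or $-qr$, and then verify that the Miquel point of the mixed family coincides with the point $\omega'$ produced in the first step. By contrast, once the two concurrences are in hand, the conic incidence is essentially automatic.
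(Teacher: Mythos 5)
Your overall strategy --- transport everything by $W^{-1}$ and prove a configuration theorem about the six points $\pm p,\pm q,\pm r$ with $p=\sqrt{u_2u_3}$, $q=\sqrt{u_1u_3}$, $r=\sqrt{u_1u_2}$ --- is the same reduction the paper makes (its Theorem~\ref{scc} and Corollary~\ref{scc-w}), and your conic argument via $M_k=D_kD_k'-(z\bar z)^2$ is genuinely slicker than the paper's Maple verification of (\ref{eqn-conic-N1}). But two of your three steps have real gaps. First, the claim that $D_4$ passes through the common point of $D_5,D_6,D_7$ ``by the evident $z\mapsto-z$ symmetry'' is unjustified: negation sends the circle through a sign-triple of $\{\pm p,\pm q,\pm r\}$ to the circle through the \emph{complementary} sign-triple, so it maps your family $\{(-,-,-),(+,+,-),(+,-,+),(-,+,+)\}$ onto the other four sign-triples, not onto itself. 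The four-circle concurrence is true (it is part of Theorem~\ref{scc}), but your argument does not establish it; $D_4$ needs its own incidence check.

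Second, and more seriously, the concurrence of $D_1,D_2,D_3$ and the identification of their common point with $\omega'$ is precisely the computational core of the paper's proof: there one finds $u_0$ as the second intersection of $C_1$ and $C_2$ by solving the real-linear system in the cross-ratio parameters $Q_1,Q_2$, observes that the resulting expression (\ref{defn-AB}) is symmetric in $u_1,u_2,u_3$ to get $C_3$, and only then checks $C_4,\dots,C_7$ by reality of cross-ratios. Your appeal to a ``Miquel-type family \dots concurrent for all $p,q,r$'' is not a citable fact: three circles each passing through two vertices of the triangle $p^2,q^2,r^2$ and an arbitrary third point do \emph{not} generally concur --- the concurrence here depends on the third points being $\pm qr,\pm rp,\pm pq$ (the geometric-mean structure), and proving it, then matching the resulting point with $-1/\omega'$, is exactly the work you defer and yourself label ``the main obstacle.'' As written, the proposal is therefore a plan rather than a proof. (Minor further points: in the conic step you must rule out $M_5\equiv M_6$, e.g.\ by noting it would force four of the six points to be concyclic, and you should carry out, not just mention, the treatment of the degenerate positions where $N$ splits into lines.)
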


\begin{proof}
Applying the M\"obius transformation $W^{-1}$ to the seven circles and the curve $W(N)$ transforms the statement of the theorem into the one of Theorem~\ref{scc}. Section~\ref{sect-mainthm} is dedicated to the proof of Theorem~\ref{scc}.
\end{proof}

The common intersection point, referred to in the theorem is $W(u_0)$, where $u_0$ is defined by (\ref{defn-AB}) and $u_k := W^{-1}(z_k)$, for $k=1,2,3$. Theorem~\ref{main-thm-2} is illustrated on 
Figure~\ref{IllThm} below. As seen on Figure~\ref{IllThm}, the curve $W(N)$ in Theorem~\ref{main-thm-2} may not be a conic section. For additional information about M\"{o}bius transformations of conic sections, see \cite{CF:2007}.

\section{The  rational quadratic function $F(z,z)$}
\label{sect-biaffineMob}

 Consider the rational quadratic function $Q(z) := F(z,z)$, or explicitly
\begin{align*}
Q(z) = - \frac{az^2+2bz+3c}{3z^2+ 2az+b}.
\end{align*}
Since $Q(z) = -P_1(z,z)/P_2(z,z)$, Corollary~\ref{lem-2014-02-25-a} shows that $Q$ is well-defined for all $z \in \CC^*$. (The numerator and the denominator cannot be simultaneously zero.)
\begin{lemma}
\label{lem-20140227-b}
The map $Q : \CC^* \rightarrow \CC^*$ is onto. Every point in $\CC^*\setminus \{u^*, v^*\}$ has two distinct pre-images in $\CC^*$. In addition, we have
$$
Q^{-1}(u^*) = \{v^*\} \quad \mbox{and} \quad Q^{-1}(v^*) = \{u^*\}.
$$
\end{lemma}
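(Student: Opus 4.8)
The plan is to conjugate $Q$ by the M\"obius transformation $W$, reducing every assertion about $Q$ to an elementary statement about the map $u\mapsto -1/u^2$ on $\CC^*$. Specializing the identity of Theorem~\ref{thm-reduction} to the diagonal $u_1=u_2=u$ is legitimate for every $u\in\CC^*$, because a pair of the form $(u,u)$ is never $(0,\infty)$ or $(\infty,0)$; it yields
$$
Q(W(u)) = F(W(u),W(u)) = W\Big(-\frac{1}{u^2}\Big),
$$
where $-1/u^2$ is read on $\CC^*$, equal to $\infty$ when $u=0$ and to $0$ when $u=\infty$. Writing $g(u):=-1/u^2$ and applying $W^{-1}$, this says precisely that $W^{-1}\circ Q\circ W = g$, that is $Q = W\circ g\circ W^{-1}$. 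Since $W$ is a non-degenerate M\"obius transformation, hence a bijection of $\CC^*$, each of the three claims about $Q$ is equivalent to the corresponding claim about $g$, with pre-image sets transported by $W$.

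Next I would read the needed facts off $g$ directly. For a target $w\in\CC^*$, the equation $-1/u^2=w$ is equivalent to $u^2=-1/w$. If $w\notin\{0,\infty\}$, then $-1/w$ is a finite nonzero number and has two distinct square roots, so $w$ has exactly two distinct pre-images; if $w=0$ the only solution is $u=\infty$, and if $w=\infty$ the only solution is $u=0$. Hence $g$ is onto, $g^{-1}(0)=\{\infty\}$, and $g^{-1}(\infty)=\{0\}$.

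Finally I would transport these through $W$, using $W(\infty)=u^*$ and $W(0)=v^*$, which is just (\ref{defnu^*v^*}) restated. Surjectivity of $Q$ follows from that of $g$. The exceptional targets $0,\infty$ of $g$ correspond under $W$ to $v^*,u^*$, so the values $w$ with two-element fibers, namely $w\notin\{0,\infty\}$, correspond exactly to $\CC^*\setminus\{u^*,v^*\}$; this gives the two-distinct-pre-image claim. The special fibers become $Q^{-1}(v^*)=\{W(\infty)\}=\{u^*\}$ and $Q^{-1}(u^*)=\{W(0)\}=\{v^*\}$. The only delicate point, and the mild obstacle here, is the bookkeeping at the point at infinity: one must confirm that the diagonal specialization is valid at $u\in\{0,\infty\}$ and that the conventions for $-1/u^2$, $W(0)$, and $W(\infty)$ are applied coherently; after that, distinctness of the generic fibers is automatic from $-1/w\neq 0$.
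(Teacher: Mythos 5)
Your argument is correct, but it is not the route the paper takes. The paper's proof is a direct computation: it rewrites $Q(z)=w$ as the quadratic $(a+3w)z^2+2(b+wa)z+(3c+wb)=0$, computes its discriminant $(a^2-3b)w^2+(ab-9c)w+(b^2-3ca)$, identifies its zeros as $u^*,v^*$ via (\ref{repru^*v^*}) (with a separate case analysis via (\ref{maybe-last-2}) when $\alpha_1,\alpha_2,\alpha_3$ form an equilateral triangle), and then invokes Lemma~\ref{lem-20140227} to pin down the special fibers $Q^{-1}(u^*)=\{v^*\}$ and $Q^{-1}(v^*)=\{u^*\}$. You instead restrict Theorem~\ref{thm-reduction} to the diagonal to get $W^{-1}\circ Q\circ W:u\mapsto -1/u^2$ and transport the obvious fiber structure of that map through the bijection $W$, using $W(\infty)=u^*$, $W(0)=v^*$ from (\ref{defnu^*v^*}). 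There is no circularity (Theorem~\ref{thm-reduction} is proved independently in Appendix~B), and your approach buys uniformity — no equilateral/non-equilateral case split, no appeal to Lemma~\ref{lem-20140227} — plus a conceptual explanation of the statement: $u^*$ and $v^*$ are exactly the $W$-images of the branch points $\infty$ and $0$ of $u\mapsto u^2$. The one point you should make explicit, which you correctly flag, is the behaviour at $u\in\{0,\infty\}$: the paper's proof of Theorem~\ref{thm-reduction} is a polynomial identity for finite $z_1,z_2$ followed by a division, so the diagonal cases $u=0$ and $u=\infty$ (which produce precisely the special fibers over $u^*$ and $v^*$) should be justified either by continuity of M\"obius transformations or by taking the theorem's stated domain $\CC^*\times\CC^*\setminus\{(0,\infty),(\infty,0)\}$ at face value. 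The paper's computation, by contrast, is heavier on algebra but entirely self-contained at those points.
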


\begin{proof}
The fact that $Q$ is onto is easy to see. 
Fix any $w \in \CC$. The equation $Q(z) =w$ is equivalent to 
$$
(a+3w)z^2+2(b+wa)z+(3c+wb) = 0
$$
with discriminant
$$
(a^2-3b)w^2+(ab-9c)w+(b^2-3ca).
$$
If $\alpha_1, \alpha_2$, and $\alpha_3$ are not vertices of an equilateral triangle, then the discriminant is
zero when $w$ is equal to $u^*$ or $v^*$, see (\ref{repru^*v^*}). Otherwise, by (\ref{maybe-last-2}) it is zero, when $w=-a/3 \in \{u^*, v^*\}$. The fact that $Q(v^*)=u^*$ and $Q(u^*)=v^*$ follows from Lemma~\ref{lem-20140227} and (\ref{maybe-last-2}).
\end{proof}

Next, we show that there is the unique symmetric M\"obius transformation $G$ that makes the diagram commute
\begin{center}
\begin{tikzpicture}[description/.style={fill=white,inner sep=2pt}]
\matrix (m) [matrix of math nodes, row sep=3em, column sep=2.5em, text height=1.5ex, text depth=0.25ex]
{ \CC^* &            & \CC^* \\
             & \CC^* & \\ };
\path[<->] (m-1-1) edge node[auto] {$ G $} (m-1-3);
\path[->] (m-1-1) edge node[auto, swap] {$ Q $} (m-2-2);
\path[->] (m-1-3) edge node[auto] {$ Q $} (m-2-2);
\end{tikzpicture}
\end{center}
It should be clear from Lemma~\ref{lem-20140227-b} that if such a transformation exists, its 
fixed points are $u^*$ and $v^*$. So, using (\ref{repru^*v^*}), define
\begin{align}
\label{fn-G}
G(z):=-\frac{(ab-9c)z+2(b^2-3ca)}{2(a^2-3b)z+(ab-9c)}.
\end{align}

\begin{lemma} \rm
\label{third-charact}
The symmetric M\"obius transformation $G(z)$ satisfies $Q \circ G = Q$.
The fixed points of $G(z)$ are $u^*$ and $v^*$.
\end{lemma}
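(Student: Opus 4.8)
The plan is to recognize $G$ as the transformation that interchanges the two preimages of $Q$. By Lemma~\ref{lem-20140227-b}, a generic value $w \in \CC^*$ has exactly two preimages under $Q$, so if I can show that $G$ carries each preimage to the other, then both assertions follow at once: $Q\circ G = Q$ because $G$ permutes every fiber of $Q$, and the fixed points of $G$ are precisely the points where the two preimages collide, namely the ramification points of $Q$, which by Lemma~\ref{lem-20140227-b} are $u^*$ and $v^*$.

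For the functional equation I would fix $z \in \CC$ and set $w := Q(z)$. As in the proof of Lemma~\ref{lem-20140227-b}, the equation $Q(Z)=w$ is equivalent to the quadratic
\begin{align*}
(a+3w)Z^2 + 2(b+wa)Z + (3c+wb) = 0,
\end{align*}
whose two roots are $z$ and its companion $z'$. Vieta's formula for the product gives $z\,z' = (3c+wb)/(a+3w)$, hence $z' = (3c+wb)/\big((a+3w)z\big)$. Substituting $w = -(az^2+2bz+3c)/(3z^2+2az+b)$ and clearing the common denominator $3z^2+2az+b$, a direct simplification yields
\begin{align*}
a+3w = \frac{2(a^2-3b)z+(ab-9c)}{3z^2+2az+b}, \qquad 3c+wb = \frac{z\big[(9c-ab)z-2(b^2-3ac)\big]}{3z^2+2az+b}.
\end{align*}
Dividing these (the factor $z$ and the denominators cancel) gives exactly
\begin{align*}
z' = \frac{(9c-ab)z-2(b^2-3ac)}{2(a^2-3b)z+(ab-9c)} = G(z).
\end{align*}
Since $z$ and $z'=G(z)$ are the two roots of $Q(Z)=w$, we get $Q(G(z)) = w = Q(z)$; the finitely many exceptional values where $a+3w=0$ or $z=0$ are covered because $Q\circ G$ and $Q$ are rational functions agreeing on a dense set.

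For the fixed points, the equation $G(z)=z$ clears to $(a^2-3b)z^2 + (ab-9c)z + (b^2-3ca) = 0$. When $\alpha_1,\alpha_2,\alpha_3$ are not vertices of an equilateral triangle, $a^2-3b\neq 0$ and the discriminant equals $(ab-9c)^2-4(a^2-3b)(b^2-3ca) = -3\Delta$, so the two roots are precisely the numbers $u^*,v^*$ recorded in (\ref{repru^*v^*}). In the equilateral case $a^2-3b=0$ the map $G$ is affine, fixing $\infty$ together with the finite point $-a/3$, which by (\ref{maybe-last-2}) is the remaining member of $\{u^*,v^*\}$. Alternatively, one may bypass the discriminant computation entirely: since $G$ swaps the fibers of $Q$, a point is fixed by $G$ exactly when its $Q$-fiber is a single point, and by Lemma~\ref{lem-20140227-b} these are exactly $u^*$ and $v^*$.

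The argument is routine rather than delicate, so the main obstacle is purely bookkeeping: performing the substitution of $w$ and the cancellation of the denominator $3z^2+2az+b$ without error, and confirming the discriminant identity $(ab-9c)^2-4(a^2-3b)(b^2-3ca) = -3\Delta$ that matches the roots of $G(z)=z$ with the expressions (\ref{repru^*v^*}) for $u^*,v^*$. Finally, observing that $G$ has traceless coefficient matrix, since its diagonal entries $-(ab-9c)$ and $(ab-9c)$ sum to zero, confirms that $G$ is genuinely an involution, consistent with its role as the fiber-swap of the two-to-one map $Q$.
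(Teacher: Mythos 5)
Your proof is correct, but it proceeds along a genuinely different route from the paper's. The paper verifies two polynomial identities directly, namely that $aG^2(z)+2bG(z)+3c$ and $3G^2(z)+2aG(z)+b$ each equal the corresponding expression in $z$ multiplied by the common factor $-3\Delta/(2(a^2-3b)z+(ab-9c))^2$; dividing these gives $Q(G(z))=Q(z)$ at once, and the fixed-point claim is read off from (\ref{repru^*v^*}) and (\ref{maybe-last-2}), since $G$ was constructed to have $u^*,v^*$ as fixed points. You instead identify $G(z)$ as the Vieta conjugate of $z$ in the fiber quadratic $(a+3w)Z^2+2(b+wa)Z+(3c+wb)=0$ with $w=Q(z)$, i.e.\ as the deck transformation of the two-to-one map $Q$; I checked your simplifications of $a+3w$ and $3c+wb$, the discriminant identity $(ab-9c)^2-4(a^2-3b)(b^2-3ca)=-3\Delta$, and the equilateral case, and all are correct. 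Your approach buys conceptual clarity: it explains \emph{why} $Q\circ G=Q$ holds (and shows $G$ is the unique nonidentity M\"obius map with this property), and it lets you recover the fixed points either from the discriminant or, more elegantly, as the ramification points of $Q$ via Lemma~\ref{lem-20140227-b}. The paper's identities, by contrast, are harder to guess but trivial to verify once written down, and they avoid the bookkeeping about exceptional values ($z=0$, $a+3w=0$) that you correctly dispatch with a density argument for rational functions.
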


\begin{proof}
The proof is immediate from the observations
\begin{align*}
aG^2(z)+2bG(z)+3c &= - \frac{3\Delta}{(2(a^2-3b)z+(ab-9c))^2} (az^2+2bz+3c), \\
3G^2(z)+2aG(z)+b &= - \frac{3\Delta}{(2(a^2-3b)z+(ab-9c))^2} (3z^2+2az+b),
\end{align*}
the definition of $Q(z)$, and (\ref{maybe-last-2}). 
\end{proof}

For any $z \in \CC^*$, let $f_{1}(z), f_{2}(z)$ be the pre-images of $z$ under $Q$.
In other words, we have
$$
Q(f_{1}(z))  = Q(f_{2}(z)) = z \mbox{ for all $z \in \CC^*$}
$$
and in particular
$$
f_{1}(u^*) = f_{2}(u^*) = v^* \quad \mbox{and} \quad  f_{1}(v^*) = f_{2}(v^*) = u^*.
$$
Thus, every bi-solution of $P$ is of the form $(f_{i}(z), f_{i}(z), z)$ for some $i \in \{1,2\}$ and some $z \in \CC$.

\begin{corollary}
For every $z \in \CC^*$, we have
$G(f_1(z)) = f_2(z).$
\end{corollary}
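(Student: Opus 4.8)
The plan is to use the previously established properties of $G$ and of the fibers of $Q$ to show that $G$ must swap the two pre-images $f_1(z)$ and $f_2(z)$. The key facts to lean on are: by Lemma~\ref{third-charact}, $G$ is a non-degenerate M\"obius transformation satisfying $Q \circ G = Q$ with fixed points exactly $u^*$ and $v^*$; and by Lemma~\ref{lem-20140227-b}, for every $z \in \CC^* \setminus \{u^*, v^*\}$ the fiber $Q^{-1}(z) = \{f_1(z), f_2(z)\}$ consists of two \emph{distinct} points.

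First I would treat the generic case $z \notin \{u^*, v^*\}$. Since $Q(G(f_1(z))) = Q(f_1(z)) = z$ by $Q \circ G = Q$, the point $G(f_1(z))$ lies in the same fiber, hence $G(f_1(z)) \in \{f_1(z), f_2(z)\}$. It remains to rule out $G(f_1(z)) = f_1(z)$. If this equality held, then $f_1(z)$ would be a fixed point of $G$, forcing $f_1(z) \in \{u^*, v^*\}$. But $f_1(z)$ is a pre-image of $z$ under $Q$, and by Lemma~\ref{lem-20140227-b} the only pre-images of points in $\{u^*,v^*\}$ are again $u^*, v^*$ (indeed $Q^{-1}(u^*) = \{v^*\}$ and $Q^{-1}(v^*) = \{u^*\}$); so $f_1(z) \in \{u^*, v^*\}$ would force $z = Q(f_1(z)) \in \{u^*, v^*\}$, contrary to assumption. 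Therefore $G(f_1(z)) = f_2(z)$ in the generic case.

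The remaining cases are $z = u^*$ and $z = v^*$, where the fiber degenerates: $f_1(u^*) = f_2(u^*) = v^*$ and $f_1(v^*) = f_2(v^*) = u^*$. Here the claimed identity $G(f_1(z)) = f_2(z)$ reduces to $G(v^*) = v^*$ and $G(u^*) = u^*$ respectively, which hold precisely because $u^*$ and $v^*$ are the fixed points of $G$ by Lemma~\ref{third-charact}. Thus the identity extends to all of $\CC^*$.

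I do not expect a serious obstacle here, since every ingredient is already packaged in the preceding lemmas; the only subtlety is the step ruling out $G(f_1(z)) = f_1(z)$, where one must invoke the explicit description of $Q^{-1}(\{u^*,v^*\})$ to keep $z$ away from the fixed-point locus. One could alternatively argue more uniformly by noting that $G$ is an involution (its fixed points being $u^*, v^*$ and its trace being zero, as is evident from the symmetric form of (\ref{fn-G})), so $G$ permutes each two-element fiber of $Q$ without fixing either point; but the fiber-by-fiber argument above is the cleaner route and avoids verifying the involution property separately.
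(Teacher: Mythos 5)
Your proposal is correct and follows essentially the same route as the paper: the core step in both is that $Q\circ G=Q$ (Lemma~\ref{third-charact}) places $G(f_1(z))$ in the fiber $Q^{-1}(z)=\{f_1(z),f_2(z)\}$. In fact your write-up is more complete than the paper's one-line proof, which stops at that point; your additional argument ruling out $G(f_1(z))=f_1(z)$ --- via the fact that the fixed points of $G$ are exactly $u^*,v^*$ and that $Q^{-1}(\{u^*,v^*\})=\{u^*,v^*\}$ --- together with the separate check at $z\in\{u^*,v^*\}$, closes a gap the paper leaves to the reader.
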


\begin{proof}
By Lemma~\ref{third-charact}, for every $z \in \CC^*$, we have
$$
z = Q(f_1(z)) = Q (G(f_1(z)))
$$
showing that $G(f_1(z))$ is in the pre-image of $z$ under $Q$.  
\end{proof}
In passing, we note that the triples
$$
\big\{(z, z, Q(z)) : z \in \CC^* \setminus \{(-a \pm \sqrt{a^2-3b})/3\} \big\}
$$
and
$$
\big\{(G(z), G(z), Q(z)) : z \in \CC^* \setminus \{(-a \pm \sqrt{a^2-3b})/3, -(ab-9c)/(2(a^2-3b))\} \big\}
$$
are two rational parametrizations of all bi-solutions of $P$. (That follows by the definition of the map $Q(z)$ and Lemma~\ref{third-charact}.)  The reason why three points are excluded from the parametrization is that 
$$
Q((-a \pm \sqrt{a^2-3b})/3)  = G(-(ab-9c)/(2(a^2-3b))) = \infty.
$$

Being a symmetric M\"obius transformation, that is an involution, $G(z)$ is similar to $-z$, see \cite[page 66]{HS:1979}.
The transformation that exhibits the similarity is the one sending the fixed points of $G(z)$, $u^*, v^*$ to $\infty, 0$. In other words, by (\ref{Winv-1}) we have 
$$
G(W(z)) = W(-z).
$$
Thus, $G$, being an involution, leaves invariant the circles of the hyperbolic pencil, call it $\mathcal{H}$, whose point circles are its fixed points $u^*$ and $v^*$.  Denote by $\mathcal{E}$, the elliptic pencil orthogonal to $\mathcal{H}$. (It consists of all circles through $u^*$ and $v^*$.) 

Incidentally, since  the M\"obius transformation $W$ sends the pencil $\mathcal{H}$ into the pencil of all circles with centre at the origin, 
we note that the circle through the zeros $\alpha_1$, $\alpha_2$, and $\alpha_3$ is a member of $\mathcal{H}$, see Figure~\ref{GeomConstr}.

Since $G$ is an involution, $G(z)$ is on the circle through $z, u^*$, and $v^*$ with $z$ and $G(z)$ being on different arcs with endpoints $u^*$ and $v^*$. In other words, for any circles $C_h^3 \in \mathcal{H}$ and $C_e^3 \in \mathcal{E}$ with intersection points $\{f_{3,1}, f_{3,2}\}$, we have $G(f_{3,1})=f_{3,2}$ and by Lemma~\ref{third-charact} we have $Q(f_{3,1})=Q(f_{3,2})$. Thus, $f_{3,1}$ and $f_{3,2}$ are the fixed points of the M\"obius transformation $z \mapsto F(z, Q(f_{3,1}))$. Hence, 
if $(z_1,z_2, z_3)$ is a solution of $P$ with $z_3:= Q(f_{3,1})$ (that is $z_2 = F(z_1, Q(f_{3,1}))$), then $z_1, z_2, f_{3,1}, f_{3,2}$ are co-circular with $z_1$ and $z_2$ being on 
different arcs with end points $f_{3,1}$ and $f_{3,2}$. The situation is illustrated on Figure~\ref{GeomConstr}
for the polynomial
$$
p(z) = (z-1)(z-(1-i))(z-(-1+i)).
$$
The orientation of the axis is easy to deduce from the displayed roots of $p(z)$.

\begin{figure}[htp]
\includegraphics[scale=0.19]{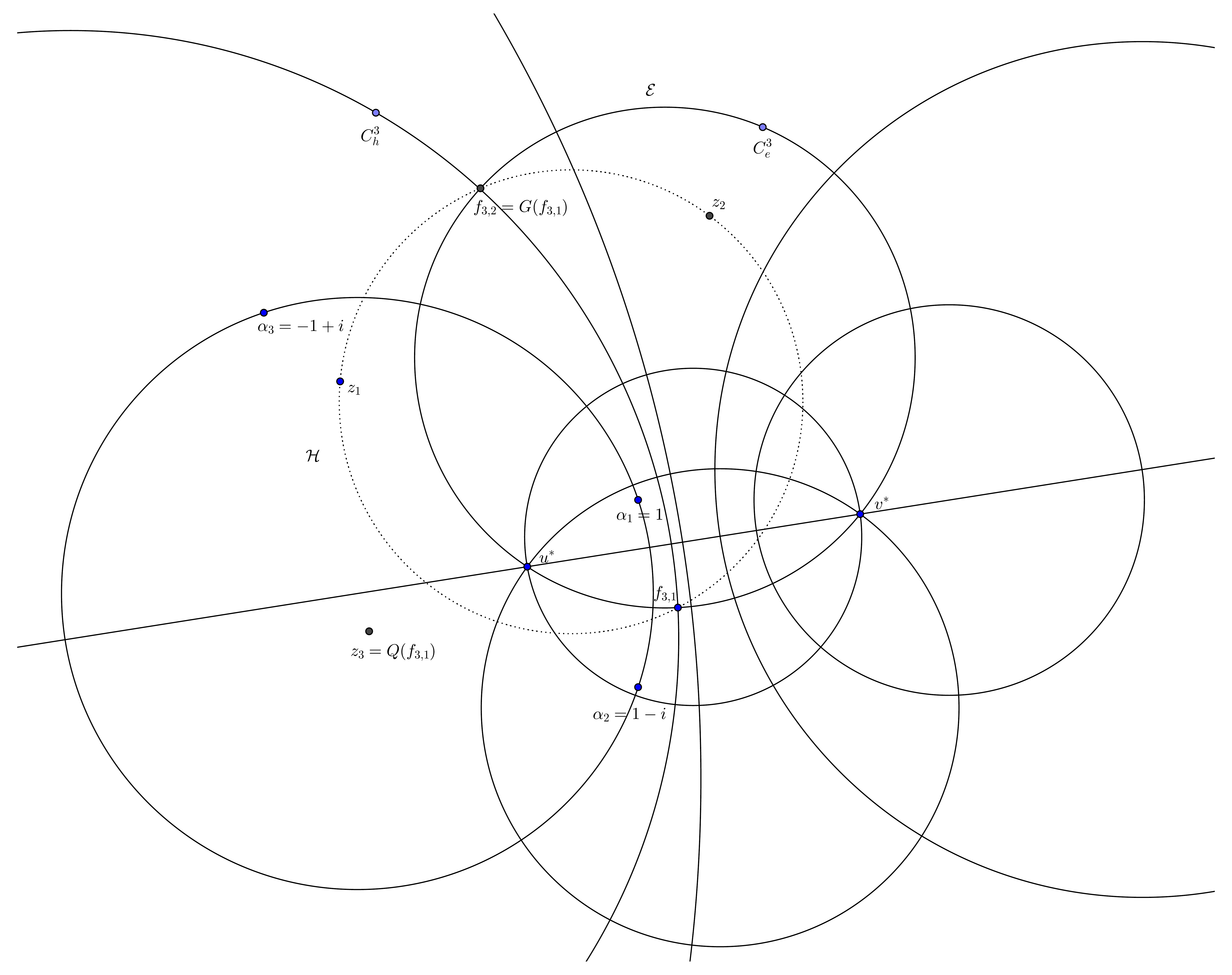}
\caption{Geometric construction of a solution of $P(z_1,z_2, z_3)=0$}
\label{GeomConstr}
\end{figure}

One can continue in a similar way to create the full picture described in Theorem~\ref{main-thm-2}.
The circles $C_h^k \in \mathcal{H}$ and $C_e^k \in \mathcal{E}$, depicted by a solid line on Figure~\ref{IllThm}, intersect at the points $\{f_{k,1}, f_{k,2}\}$, such that $G(f_{k,1})=f_{k,2}$ and  $Q(f_{k,1})=Q(f_{k,2}) = z_k$, for $k=1,2,3$. The triple $(z_1, z_2, z_3)$ is a solution of $P$. The circles $C_1, \ldots, C_7$ in Theorem~\ref{main-thm-2} are displayed with a dotted line, while the curve $W(N)$ is displayed with a dashed line. They all intersect at the point $W(u_0)$, where $u_0$ is defined by (\ref{defn-AB}) and $u_k := W^{-1}(z_k)$, for $k=1,2,3$.
 
\begin{figure}[htp]
\includegraphics[scale=0.25]{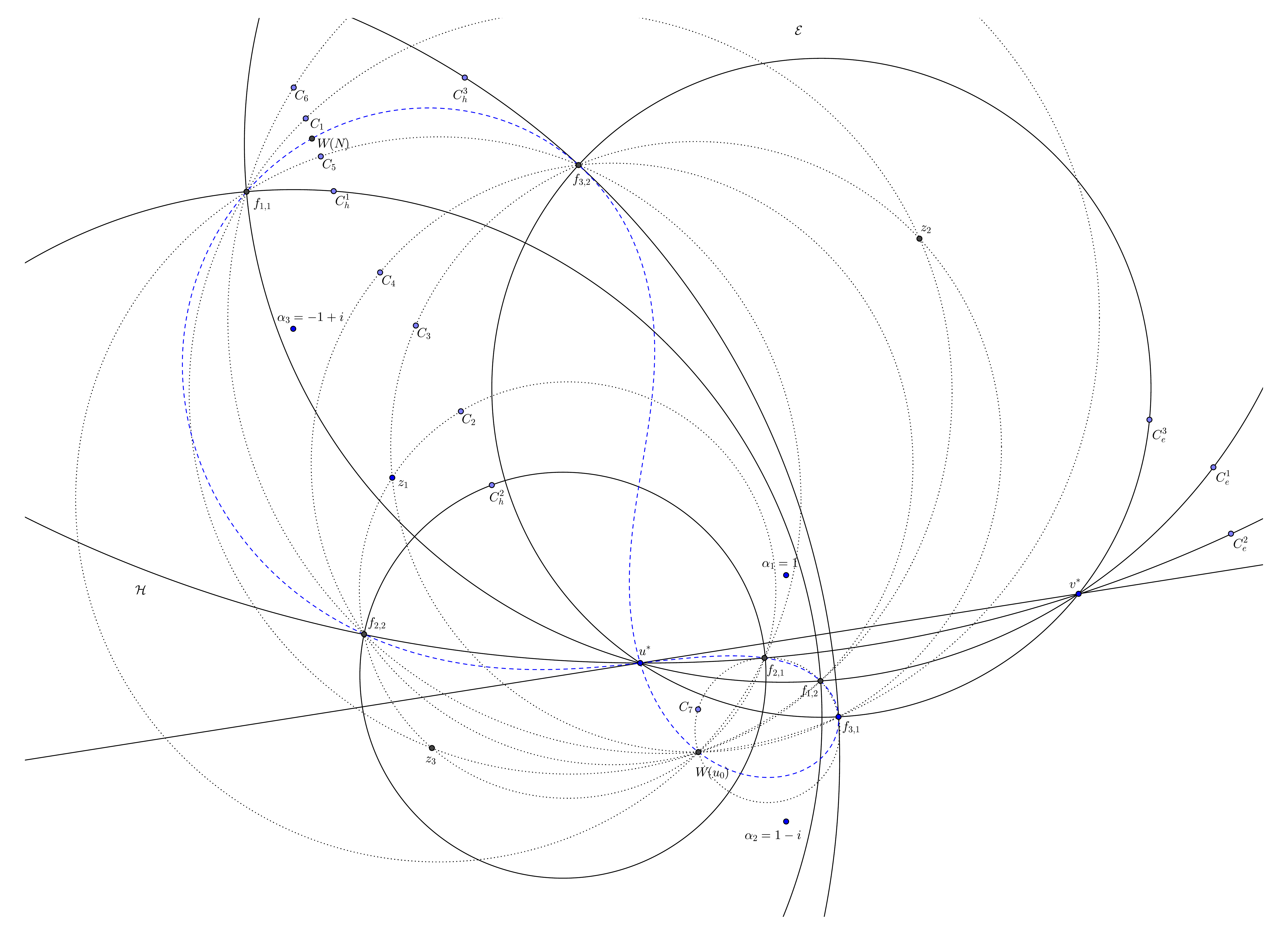}
\caption{Illustrating Theorem~\ref{main-thm-2}}
\label{IllThm}
\end{figure}

\section{Proof of Theorem~\ref{main-thm-2}}
\label{sect-mainthm}

The cross-ratio $Q$ of four points $u_1,u_2,u_3,u_4\in\CC$ is
\begin{equation}\label{eq1}
Q=\frac{u_1-u_3}{u_1-u_4} \frac{u_2-u_4}{u_2-u_3}.
\end{equation}
It is a well-known fact that the four points are co-circular if and only if $Q\in\mathbb{R}.$

Figure~\ref{7cir&1con} illustrates Theorem~\ref{scc}. On the figure, the common intersection point of the seven circles and the conic is denoted by $u_0$.

\begin{figure}[htp]
\includegraphics[scale=0.65]{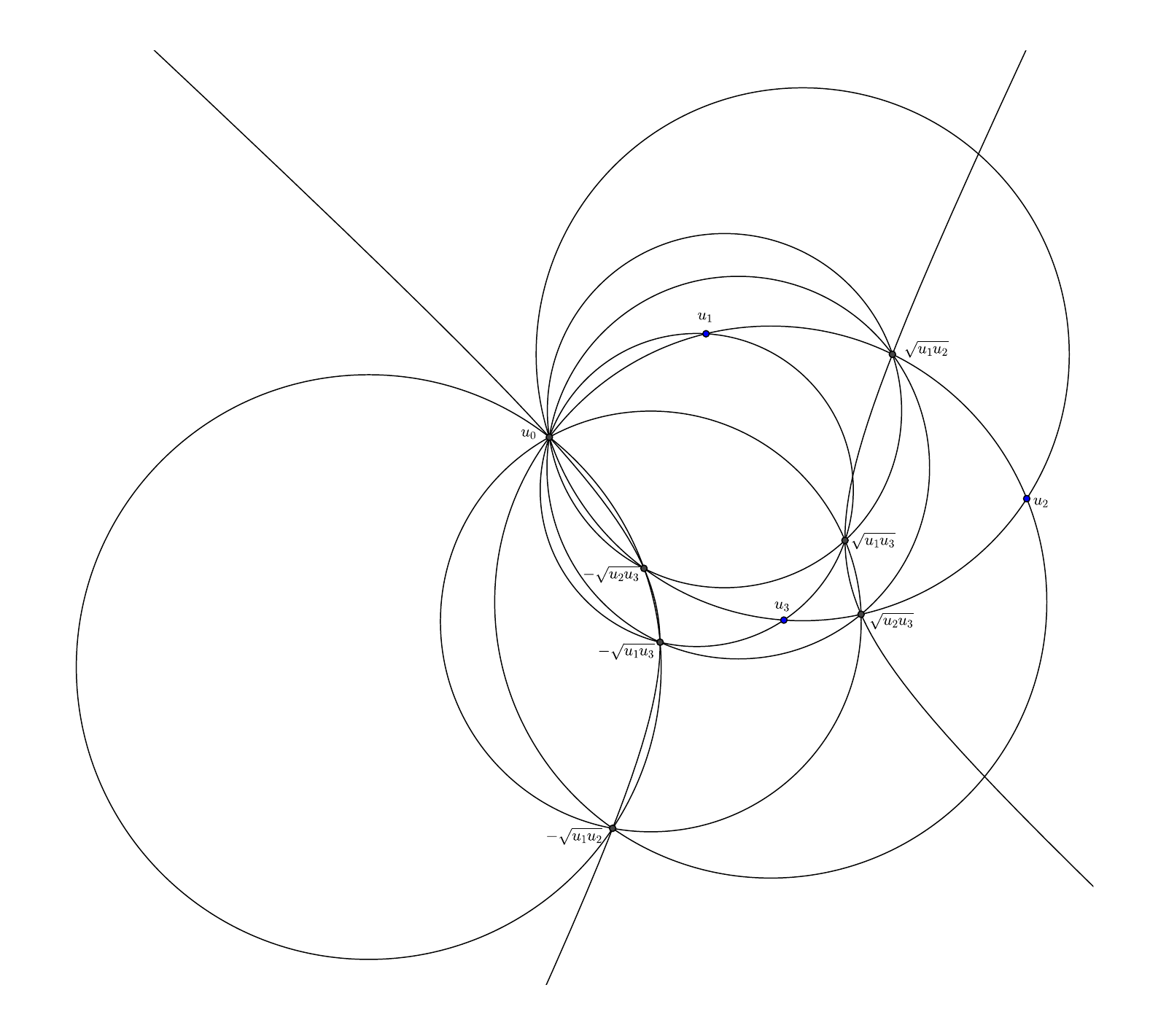}
\caption{Seven circles and a conic intersecting at one point}
\label{7cir&1con}
\end{figure}

\begin{theorem}
\label{scc}
Let $u_1,u_2,u_3$ be three non-zero, distinct points in the complex plane. Define seven circles and one conic  as follows.
\begin{itemize}
\item[$C_1:$] the circle determined by $u_2,u_3,\sqrt{u_2u_3};$
\item[$C_2:$] the circle determined by $u_1,u_3,\sqrt{u_1u_3};$
\item[$C_3:$] the circle determined by $u_1,u_2,\sqrt{u_1u_2};$
\item[$C_4:$] the circle determined by $-\sqrt{u_1u_2},-\sqrt{u_1u_3},-\sqrt{u_2u_3};$
\item[$C_5:$] the circle determined by $-\sqrt{u_1u_2},\sqrt{u_1u_3},\sqrt{u_2u_3};$
\item[$C_6:$] the circle determined by $\sqrt{u_1u_2},-\sqrt{u_1u_3},\sqrt{u_2u_3};$
\item[$C_7:$] the circle determined by $\sqrt{u_1u_2},\sqrt{u_1u_3},-\sqrt{u_2u_3};$
\item[$N_1:$] the conic determined by any five of $\pm\sqrt{u_1u_2},\pm\sqrt{u_1u_3},\pm\sqrt{u_2u_3}.$
\end{itemize}
Then all eight of these curves have a common intersection point. Moreover, the common intersection point is
\begin{align}
\label{defn-AB}
u_0:=\frac{\det A}{\det B},
\end{align}
where
\begin{align*}
A=\left[
\begin{array}{c c c}
\sqrt{\bar{u}_1 \bar{u}_2} & \sqrt{\bar{u}_1\bar{u}_3} & \sqrt{\bar{u}_2\bar{u}_3}\\
\sqrt{u_3} & \sqrt{u_2} & \sqrt{u_1}\\
\sqrt{\bar{u}_3} & \sqrt{\bar{u}_2} & \sqrt{\bar{u}_1}
\end{array}
\right],
~B= \left[
\begin{array}{c c c}
\bar{u}_1 & \bar{u}_2 & \bar{u}_3 \\[0.1cm]
\sqrt{\bar{u}_1 / u_1} & \sqrt{\bar{u}_2 / u_2} & \sqrt{\bar{u}_3 / u_3} \\[0.1cm]
1 & 1 & 1
\end{array}
\right].
\end{align*}
\end{theorem}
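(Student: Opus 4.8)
The tool is the co-circularity criterion recorded in \eqref{eq1}: four points lie on a common circle or line precisely when their cross-ratio is real. The plan is to exhibit a single point $u_0$ and then verify, one family at a time, that it lies on each of $C_1,\dots,C_7$ and on $N_1$. Throughout I would write $s_i:=\sqrt{u_i}$, so that under the standing convention $\sqrt{u_iu_j}=s_is_j$; then every one of the data points $u_i$ and $\pm\sqrt{u_iu_j}$ becomes a monomial in $s_1,s_2,s_3$ (for example the three points defining $C_1$ are $u_2=s_2^2$, $u_3=s_3^2$, $\sqrt{u_2u_3}=s_2s_3$), complex conjugation simply replaces $s_i$ by $\bar s_i=\sqrt{\bar u_i}$, and $s_i\bar s_i=|u_i|$ is real. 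This monomial bookkeeping is what turns each cross-ratio into a rational function of $s_1,s_2,s_3,\bar s_1,\bar s_2,\bar s_3$ whose reality can be checked by direct manipulation.

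The circles $C_1,C_2,C_3$ are the easy and illuminating case, because in each the third defining point is the geometric mean of the other two, which collapses the cross-ratio. Setting $h_i:=(w-u_i)/s_i=w/s_i-s_i$, a short computation of \eqref{eq1} for $w,u_2,u_3,\sqrt{u_2u_3}$ reduces the geometric-mean factor to $-s_3/s_2$ and yields
\[
w\in C_1\iff \frac{h_2}{h_3}\in\RR,\qquad w\in C_2\iff\frac{h_1}{h_3}\in\RR,\qquad w\in C_3\iff\frac{h_1}{h_2}\in\RR.
\]
This already forces the concurrence of $C_1,C_2,C_3$: the circles $C_1,C_2$ both pass through $u_3$, hence meet at $u_3$ and a second point $u_0$; since $h_3=(u_0-u_3)/s_3\neq0$ there, the relations $h_2/h_3\in\RR$ and $h_1/h_3\in\RR$ give $h_1/h_2=(h_1/h_3)(h_3/h_2)\in\RR$, i.e. $u_0\in C_3$. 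I would therefore \emph{define} $u_0$ as this second intersection of $C_1$ and $C_2$.

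For $C_4,\dots,C_7$ and the conic the same cross-ratio machinery applies but without the geometric-mean shortcut. The four circles pass through the signed product-points $\pm\sqrt{u_iu_j}$ whose sign pattern has product $-1$, and they are naturally paired by a shared vertex ($C_4,C_5$ through $-\sqrt{u_1u_2}$, and so on). Writing $w+\sigma s_is_j=s_is_j\,(w/(s_is_j)+\sigma)$, the cross-ratio again factors into a part depending on $w$ only through the quantities $w/\sqrt{u_iu_j}$ and a constant part built from the $s_i$ and their differences; I would then show that the second intersection of each paired circle is again $u_0$, giving concurrence of all seven. For $N_1$ I would use that it is the unique \emph{central} conic through $\pm\sqrt{u_iu_j}$ (three antipodal pairs automatically share a central conic, which is exactly why any five of the six points determine the same $N_1$), so that membership of $w$ is the vanishing of the $4\times4$ determinant with rows $(w^2,w\bar w,\bar w^2,1)$ and $(u_iu_j,|u_iu_j|,\bar u_i\bar u_j,1)$, to be checked at $u_0$. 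The explicit formula $u_0=\det A/\det B$ then comes from writing the two circle equations defining $u_0$ in determinant form, factoring out the known common vertex, and solving; after substituting $\sqrt{u_iu_j}=s_is_j$ and $\overline{\sqrt{u_iu_j}}=\bar s_i\bar s_j$, the surviving minors are precisely $A$ and $B$.

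The main obstacle is the $C_4$--$C_7$ and conic bundle. Unlike the geometric-mean circles, the product-points carry no such relation, so the reality of their cross-ratios cannot be read off in one line but must be extracted from the symmetric cancellations among the monomials in $s_i,\bar s_i$; coordinating these with the conic condition and confirming that every pairwise second intersection is the \emph{same} $u_0$ is the heart of the argument. A secondary but genuine nuisance is branch-consistency: the identities $\sqrt{u_iu_j}=s_is_j$ and $\overline{\sqrt{u_i}}=\sqrt{\bar u_i}$ hold only under the stated principal-branch convention, and the signs appearing in $C_4,\dots,C_7$ and in the matrices $A,B$ must be tracked carefully so that they are the intended ones.
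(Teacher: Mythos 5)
Your overall strategy coincides with the paper's: both rest on the reality of the cross-ratio, both locate $u_0$ as the second intersection point of two of the first three circles, and both must then verify membership for the remaining five curves. Two of your refinements are genuinely nicer than what the paper does. The identity $w\in C_1\iff h_2/h_3\in\RR$ (with $h_i=(w-u_i)/\sqrt{u_i}$) is correct --- the geometric-mean point collapses the constant factor of the cross-ratio to $-\sqrt{u_3}/\sqrt{u_2}$ --- and it yields the concurrence of $C_1,C_2,C_3$ by pure transitivity, whereas the paper eliminates the real parameters $Q_1,Q_2$ from the parametrized circle equations and only observes a posteriori that the resulting $u_0$ is symmetric in $u_1,u_2,u_3$. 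Likewise, deducing that $N_1$ is a central conic from the antipodal symmetry of the six points is cleaner than the paper's route of solving a $5\times5$ linear system and discovering that the linear coefficient $\beta$ vanishes.

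The gap is that the proposal stops exactly where the real work begins, and the one new idea offered for that part does not function. For $C_4,\dots,C_7$ there is no analogue of the $h_i$ telescoping: for $C_5$, say, the constant factor of the cross-ratio is $\sqrt{u_2}(\sqrt{u_1}+\sqrt{u_3})\big/\big(\sqrt{u_1}(\sqrt{u_2}+\sqrt{u_3})\big)$, which is not real and cancels against nothing, so the reality conditions for these four circles are not ratios drawn from a common family and cannot be chained. Your ``pairing by a shared vertex'' only says that $C_4$ and $C_5$ meet at $-\sqrt{u_1u_2}$ and at one further point; it gives no reason that this further point is $u_0$, nor that the second intersections of different pairs agree --- establishing either claim is precisely the computation you have deferred. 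What is actually required (and what the paper supplies) is the explicit closed form for $u_0$, obtained by the very elimination your transitivity argument was designed to avoid, followed by a direct simplification showing that the cross-ratio of the defining points of $C_4$ with that $u_0$ equals $|\sqrt{u_1}-\sqrt{u_3}|^2/|\sqrt{u_1}-\sqrt{u_2}|^2$ (similarly for $C_5,C_6,C_7$), and a substitution of $u_0$ into the equation of $N_1$ (done by computer algebra in the paper). Until those computations are carried out, neither the concurrence of all eight curves nor the formula $u_0=\det A/\det B$ is proved. A minor further point: defining $u_0$ as ``the second intersection of $C_1$ and $C_2$'' presupposes that these circles are neither equal nor tangent at $u_3$; the paper's explicit formula sidesteps this.
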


\begin{proof}
A point $u \in \CC$ is on the circle $C_1$, defined by the points, $u_2,u_3, \sqrt{u_2u_3}$, if and only if the cross ratio of 
$u_2,u_3, \sqrt{u_2u_3}$ and $u$ is a real number. Thus, substituting these four points into
equation (\ref{eq1}), we obtain the parametrized equation (with real parameter $Q_1$) of the circle $C_1.$ For ease of notation, let 
\begin{align}
\label{notation-wi}
w_i=\sqrt{u_i},~i\in\{1,2,3\}.
\end{align}
Then,
\begin{align*}
Q_1&=\frac{w_2w_3-w_3^2}{w_2w_3-u}\frac{w_2^2-u}{w_2^2-w_3^2}
\end{align*}
and solving for $u$, we obtain the equation of $C_1$:
\begin{equation}\label{eq2}
u=\frac{w_2^2(w_2w_3-w_3^2)-Q_1w_2w_3(w_2^2-w_3^2)}{w_2w_3-w_3^2-Q_1(w_2^2-w_3^2)}, \quad Q_1 \in \RR.
\end{equation}
Similarly, we generate the equation of $C_2$:
\begin{equation}\label{eq3}
u=\frac{w_3^2(w_1w_3-w_1^2)-Q_2w_1w_3(w_3^2-w_1^2)}{w_1w_3-w_1^2-Q_2(w_3^2-w_1^2)}, \quad Q_2 \in \RR,
\end{equation}
and that of $C_3$:
\begin{align*}
u=\frac{w_1^2(w_1w_2-w_2^2)-Q_3w_1w_2(w_1^2-w_2^2)}{w_1w_2-w_2^2-Q_3(w_1^2-w_2^2)}, \quad Q_3 \in \RR.
\end{align*}
To obtain the points of intersection of $C_1$ and $C_2$ we equate (\ref{eq2}) and (\ref{eq3}):
$$
\frac{w_2^2(w_2w_3-w_3^2)-Q_1w_2w_3(w_2^2-w_3^2)}{w_2w_3-w_3^2-Q_1(w_2^2-w_3^2)}=\frac{w_3^2(w_1w_3-w_1^2)-Q_2w_1w_3(w_3^2-w_1^2)}{w_1w_3-w_1^2-Q_2(w_3^2-w_2^2)},
$$
and solve for $Q_1$ and $Q_2$ as follows.  The last equation expands to
\begin{align*}
w_1w_2^3w_3^2 &-w_1^2w_2^3w_3-Q_2w_2^3w_3^3+Q_2w_1^2w_2^3w_3-w_1w_2^2w_3^3+w_1^2w_2^2w_3^2+Q_2w_2^2w_3^4 \\
&-Q_2w_1^2w_2^2w_3^2-Q_1w_1w_2^3w_3^2
+Q_1w_1^2w_2^3w_3+Q_1Q_2w_2^3w_3^3-Q_1Q_2w_1^2w_2^3w_3 \\
&+Q_1w_1w_2w_3^4-Q_1w_1^2w_2w_3^3-Q_1Q_2w_2w_3^5+Q_1Q_2w_1^2w_2w_3^3\\
=w_1&w_2w_3^4-w_1^2w_2w_3^3-Q_2w_1w_2w_3^4+Q_2w_1^3w_2w_3^2-w_1w_3^5+w_1^2w_3^4+Q_2w_1w_3^5 \\
&-Q_2w_1^3w_3^3 -Q_1w_1w_2^2w_3^3
+Q_1w_1^2w_2^2w_3^2+Q_1Q_2w_1w_2^2w_3^3-Q_1Q_2w_1^3w_2^2w_3 \\
&+Q_1w_1w_3^5-Q_1w_1^2w_3^4-Q_1Q_2w_1w_3^5+Q_1Q_2w_1^3w_3^3.
\end{align*}
This is an equation in two variables, the real parameters $Q_1$ and $Q_2,$ and by taking the complex conjugate of the above equation we obtain a second equation in the same two variables. Solving the resulting system of two equations for $Q_1$ and $Q_2,$ and then substituting back into, say (\ref{eq2}),  yields the intersection point $u_0$ of $C_1$ and $C_2$:
$$
\frac{w_1w_2w_3(-w_1\bar{w}_1\bar{w}_2^2+w_1\bar{w}_1\bar{w}_3^2-\bar{w}_1^2w_3\bar{w}_3+\bar{w}_2^2w_3\bar{w}_3+\bar{w}_1^2w_2\bar{w}_2-w_2\bar{w}_2\bar{w}_3^2)}{w_1w_2\bar{w}_2^2\bar{w}_3-w_1\bar{w}_2w_3\bar{w}_3^2+w_1\bar{w}_1^2\bar{w}_2w_3-w_1\bar{w}_1^2w_2\bar{w}_3+\bar{w}_1w_2w_3\bar{w}_3^2-\bar{w}_1w_2\bar{w}_2^2w_3}.
$$
(Here $\bar{w}_i$ is the complex conjugate of $w_i.$) Of course, the other intersection point between $C_1$ and $C_2$ is trivially $u_3$ by the definition of the circles.  Notice that the parenthetical portion of the numerator of $u_0$ is $\det A_\circ,$ and the denominator is $\det B_\circ,$ where 
$$
A_\circ=\left[\begin{array}{c c c}
\bar{w}_1\bar{w}_2&\bar{w}_1\bar{w}_3&\bar{w}_2\bar{w}_3\\
w_3&w_2&w_1\\
\bar{w}_3&\bar{w}_2&\bar{w}_1
\end{array}\right],~B_\circ= \left[\begin{array}{c c c}
w_1\bar{w}_1^2&w_2\bar{w}_2^2&w_3\bar{w}_3^2\\
\bar{w}_1&\bar{w}_2&\bar{w}_3\\
w_1&w_2&w_3
\end{array}\right].$$ 
In other words, we have
\begin{align}
\label{defn-z0}
u_0 = w_1w_2w_3\frac{\det A_\circ}{\det B_\circ}.
\end{align}
Equation~(\ref{defn-AB}) follows from here, after dividing $\det B_\circ$ by $w_1w_2w_3$ and reverting to the original variables $u_1,u_2,$ and $u_3$.

Upon inspection of the last expression for $u_0,$ one notes that it is invariant under every permutation of $u_1,u_2,u_3.$ This allows us to conclude that it does not matter which two of the three circles $C_1,$ $C_2,$ $C_3$ we choose to equate, the intersection point will be the same. That is, $C_1,$ $C_2$ and $C_3$ intersect at the point $u_0.$ 

To demonstrate that the other four circles intersect at $u_0$ as well, we need only show, for each circle, that the cross-ratio of the points defining the circle together with $u_0$ is a real number. We present the calculations for $C_4$ only. The other three cases are identical in procedure and the details are left out. Using notation (\ref{notation-wi}) and later on we let $w_i=u_i+iv_i,$ for $i=1,2,3$, the cross-ratio of the four points
$-\sqrt{u_1u_2},-\sqrt{u_1u_3},-\sqrt{u_2u_3},$ and $u_0$ is
\begin{align*}
Q&=\frac{u_0+w_1w_3}{u_0+w_1w_2}\left(\frac{-w_2w_3+w_1w_3}{-w_2w_3+w_1w_2}\right)^{-1}\\
&=\frac{(w_1w_2-w_2w_3)\big(w_1w_3+w_1w_2w_3 \det A_\circ / \det B_\circ\big)}{(w_1w_3-w_2w_3)\big(w_1w_2+w_1w_2w_3 \det A_\circ / \det B_\circ \big)}\\
&=\frac{(w_1-w_3)(\bar{w}_1-\bar{w}_3)}{(w_1-w_2)(\bar{w}_1-\bar{w}_2)}\\
&=\frac{w_1\bar{w}_1-\bar{w}_1w_3-w_1\bar{w}_3+w_3\bar{w}_3}{w_1\bar{w}_1-\bar{w}_1w_2-w_1\bar{w}_2+w_2\bar{w}_2}\\
&=\frac{|w_1|^2-(u_1-iv_1)(u_3+iv_3)-(u_1+iv_1)(u_3-iv_3)+|w_3|^2}{|w_1|^2-(u_1-iv_1)(u_2+iv_2)-(u_1+iv_1)(u_2-iv_2)+|w_2|^2}\\
&=\frac{|w_1|^2-u_1u_3-v_1v_3-i(u_1v_3-u_3v_1)-u_1u_3-v_1v_3+i(u_1v_3-u_3v_1)+|w_3|^2}{|w_1|^2-u_1u_2-v_1v_2-i(u_1v_2-u_2v_1)-u_1u_2-v_1v_2+i(u_1v_2-u_2v_1)+|w_2|^2}\\
&=\frac{|w_1|^2-2(u_1u_3+v_1v_3)+|w_3|^2}{|w_1|^2-2(u_1u_2+v_1v_2)+|w_2|^2}.
\end{align*}
The last expression shows that the cross-ration $Q$ is a real number.
Hence, $C_4$ contains the point $u_0.$ Similarly, one can show that the point $u_0$ is on the circles $C_5,$ $C_6$ and $C_7$ as well.

Finally, to show that the conic $N_1$ contains the intersection point $u_0,$ we use the general formula for a conic section:
$$
\alpha u^2+\bar{\alpha}\bar{u}^2+\beta u+\bar{\beta}\bar{u}+fu\bar{u}+e=0,
$$
where $\alpha$ and $\beta$ are complex, and $f$ and $e$ are real numbers. 
Our task is to find the coefficients $\alpha, \beta$, $f,$ and $e$ so that the conic passes through the points 
$\pm\sqrt{u_1u_2},\pm\sqrt{u_1u_3},\pm\sqrt{u_2u_3}.$
First we divide through by one of the real coefficients that is not equal to zero, say $f,$ to eliminate it as a variable. That is, we use the above equation with $f=1.$ Recalling (\ref{notation-wi}), we use any five of the six points (namely $\pm\sqrt{u_1u_2},\pm\sqrt{u_1u_3},\sqrt{u_2u_3}$) to obtain five equations in the five variables $a,b,c,d,e,$ where $\alpha=a+ib$ and $\beta=c+id:$ 
\begin{align*}
(a+ib)w_1^2w_2^2+(a-ib)\bar{w}_1^2\bar{w}_2^2+(c+id)w_1w_2+(c-id)\bar{w}_1\bar{w}_2+w_1w_2\bar{w}_1\bar{w}_2+e=0,\\
(a+ib)w_1^2w_3^2+(a-ib)\bar{w}_1^2\bar{w}_3^2+(c+id)w_1w_3+(c-id)\bar{w}_1\bar{w}_3+w_1w_3\bar{w}_1\bar{w}_3+e=0,\\
(a+ib)w_2^2w_3^2+(a-ib)\bar{w}_2^2\bar{w}_3^2+(c+id)w_2w_3+(c-id)\bar{w}_2\bar{w}_3+w_2w_3\bar{w}_2\bar{w}_3+e=0,\\
(a+ib)w_1^2w_2^2+(a-ib)\bar{w}_1^2\bar{w}_2^2-(c+id)w_1w_2-(c-id)\bar{w}_1\bar{w}_2+w_1w_2\bar{w}_1\bar{w}_2+e=0,\\
(a+ib)w_1^2w_3^2+(a-ib)\bar{w}_1^2\bar{w}_3^2-(c+id)w_1w_3-(c-id)\bar{w}_1\bar{w}_3+w_1w_3\bar{w}_1\bar{w}_3+e=0.
\end{align*}
Solving this system is a cumbersome task, but the equations are linear in $a,b,c,d,e$ and with a little work we find that
\begin{align*}
a&=\frac{a_1+a_2}{-2P}, & 
b&=\frac{i(b_1+b_2)}{2P},\\
c&=0, & 
d&=0, & \mbox{ and}\\
e&=\frac{e_1e_2}{P},
\end{align*}
where
\begin{align*}
a_1&=w_1\bar{w}_1^3\bar{w}_2\bar{w}_3(\bar{w}_2w_3-w_2\bar{w}_3)+w_2\bar{w}_2^3\bar{w}_1\bar{w}_3(w_1\bar{w}_3-\bar{w}_1w_3) \\
&+w_3\bar{w}_3^3\bar{w}_1\bar{w}_2(\bar{w}_1w_2-w_1\bar{w}_2),\\
a_2&=w_1^3\bar{w}_1w_2w_3(\bar{w}_2w_3-w_2\bar{w}_3)+w_2^3\bar{w}_2w_1w_3(w_1\bar{w}_3-\bar{w}_1w_3) \\
&+w_3^3\bar{w}_3w_1w_2(\bar{w}_1w_2-w_1\bar{w}_2),\\
b_1&=w_1\bar{w}_1^3\bar{w}_2\bar{w}_3(\bar{w}_2w_3-w_2\bar{w}_3)+w_2\bar{w}_2^3\bar{w}_1\bar{w}_3(w_1\bar{w}_3-\bar{w}_1w_3) \\
&+w_3\bar{w}_3^3\bar{w}_1\bar{w}_2(\bar{w}_1w_2-w_1\bar{w}_2),\\
b_2&=w_1^3\bar{w}_1w_2w_3(w_2\bar{w}_3-\bar{w}_2w_3)+w_2^3\bar{w}_2w_1w_3(\bar{w}_1w_3-w_1\bar{w}_3) \\
&+w_3^3\bar{w}_3w_1w_2(w_1\bar{w}_2-\bar{w}_1w_2),\\
e_1&=w_1\bar{w}_1w_2\bar{w}_2w_3\bar{w}_3,\\
e_2&=w_1\bar{w}_1w_2^2\bar{w}_3^2-w_1^2w_2\bar{w}_2\bar{w}_3^2+\bar{w}_1^2w_2\bar{w}_2\bar{w}_3^2 -w_1\bar{w}_1\bar{w}_2^2w_3^2+w_1^2\bar{w}_2^2w_3\bar{w}_3 \\
&-\bar{w}_1^2w_2^2w_3\bar{w}_3,\\
P&=w_1^2\bar{w}_1^2\bar{w}_2^2w_3^2-\bar{w}_1^2w_2^2\bar{w}_2^2w_3^2-w_1^2\bar{w}_2^2w_3^2\bar{w}_3^2+w_1^2w_2^2\bar{w}_2^2\bar{w}_3^2+\bar{w}_1^2w_2^2w_3^2\bar{w}_3^2 \\
&-w_1^2\bar{w}_1^2w_2^2\bar{w}_3^2.
\end{align*}
The fact that $\beta = c+id = 0$ shows that the conic passes through the sixth point $-\sqrt{u_2u_3}$ as well, since it is symmetric with respect to the origin.
A little bit more algebra shows that $\alpha=- a_1/P$ and $\bar{\alpha}=(-a_2+b_2)/2P.$ Now we have the general equation with the coefficients in terms of the $w_i$ and their conjugates,
\begin{align}
\label{eqn-conic-N1}
-\frac{a_1}{P}u^2+\frac{-a_2+b_2}{2P}\bar{u}^2+u\bar{u}+\frac{e_1e_2}{P}=0.
\end{align}
It remains to verify that $u_0$ also satisfies this equation. It is somewhat straightforward to substitute (\ref{defn-z0}) and
\begin{align*}
\bar{u}_0&=\bar{w}_1\bar{w}_2\bar{w}_3\frac{\det \bar{A}_\circ}{\det \bar{B}_\circ}.
\end{align*}
into (\ref{eqn-conic-N1}) and employ a program such as Maple to show that the left-hand side of this expression does indeed reduce to zero. This proves that $u_0$ lies on $N_1.$
\end{proof}

If two of the points $u_1,u_2,u_3$ in the statement of Theorem~\ref{scc} are on the same circle with centre
at the origin (i.e. have equal modulus), then $u_0$ is also on that circle. Finally, if all three points $u_1,u_2,u_3$ are on the same circle with centre at the origin, then all seven circles and the conic coincide with that circle as well. Some related problems are considered in Examples 34 and 35 in \cite{Modenov:1981}.

A simple substitution 
\begin{align*}
w_1:= \sqrt{u_2u_3}, \,\,\, w_2:= \sqrt{u_1u_3}, \mbox{ and } w_3:=\sqrt{u_1u_2}.
\end{align*}
allows us to state Theorem~\ref{scc} in an alternative form.

\begin{corollary}
\label{scc-w}
Let $w_1, w_2, w_3$ be three non-zero, distinct points in the complex plane. Define seven circles and one conic  as follows.
\begin{itemize}
\item[$C_1:$] the circle determined by $w_1, w_1w_3/w_2,  w_1w_2/w_3;$
\item[$C_2:$] the circle determined by $w_2w_3/w_1, w_2, w_1w_2/w_3;$
\item[$C_3:$] the circle determined by $w_2w_3/w_1, w_1w_3/w_2, w_3;$
\item[$C_4:$] the circle determined by $-w_1, -w_2,-w_3;$
\item[$C_5:$] the circle determined by $w_1, w_2, -w_3;$
\item[$C_6:$] the circle determined by $w_1, -w_2, w_3;$
\item[$C_7:$] the circle determined by $-w_1, w_2, w_3;$
\item[$N_1:$] the conic determined by any five of $\pm w_1, \pm w_2, \pm w_3.$
\end{itemize}
Then all eight of these curves have a common intersection point. The formula for the common intersection point
may be obtained by substituting 
$$
u_1= w_2w_3/w_1, \,\,\,
u_2= w_1w_3/w_2, \mbox{ and }  u_3= w_1w_2/w_3
$$
into (\ref{defn-AB}).
\end{corollary}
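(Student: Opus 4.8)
The plan is to obtain the corollary as a pure change of variables in Theorem~\ref{scc}, the one delicate point being a consistent selection of square roots. Given non-zero, distinct $w_1, w_2, w_3$, I would set
\begin{align*}
u_1 := \frac{w_2w_3}{w_1}, \qquad u_2 := \frac{w_1w_3}{w_2}, \qquad u_3 := \frac{w_1w_2}{w_3},
\end{align*}
and record the identities $u_2u_3 = w_1^2$, $u_1u_3 = w_2^2$, and $u_1u_2 = w_3^2$, so that each given $w_k$ is a square root of the product $u_iu_j$ (indices $\{i,j,k\}=\{1,2,3\}$) that feeds Theorem~\ref{scc}. To apply that theorem I need the $u_i$ non-zero and distinct: non-vanishing is immediate, while $u_i=u_j$ forces $w_i^2=w_j^2$, so distinctness holds provided $w_i\neq-w_j$ as well. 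The excluded coincidences collapse one of the circles to two points and are best treated as degenerate limits, in the spirit of the remark following Theorem~\ref{scc}.

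The crux is to pin down the roots so that the two configurations coincide exactly. Inspecting the proof of Theorem~\ref{scc}, one sees it uses nothing about its square roots beyond $\sqrt{u_i}^2=u_i$ and $\sqrt{u_iu_j}=\sqrt{u_i}\,\sqrt{u_j}$; the reality computations, such as the explicit one carried out for $C_4$, go through verbatim for \emph{any} admissible choice of roots, not only the principal branch. I would therefore fix a value $p$ with $p^2=w_1w_2w_3$ and declare $\sqrt{u_i}:=p/w_i$. Then $(p/w_i)^2 = w_jw_k/w_i = u_i$ and $\sqrt{u_i}\,\sqrt{u_j}=p^2/(w_iw_j)=w_k$, so this single choice simultaneously yields $\sqrt{u_2u_3}=w_1$, $\sqrt{u_1u_3}=w_2$, $\sqrt{u_1u_2}=w_3$, and the six conic-defining points $\pm\sqrt{u_iu_j}$ become precisely $\pm w_1,\pm w_2,\pm w_3$. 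Checking that one consistent assignment of roots can meet all three product constraints at once, and that Theorem~\ref{scc} is genuinely indifferent to which roots are used, is the only real obstacle; everything after it is transcription.

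With this dictionary the eight curves correspond one for one. For instance $C_1=\{u_2,u_3,\sqrt{u_2u_3}\}$ becomes $\{w_1w_3/w_2,\,w_1w_2/w_3,\,w_1\}$, the corollary's $C_1$, and likewise for $C_2,C_3$; the four circles $C_4,\dots,C_7$ of Theorem~\ref{scc} are exactly those through the triples $\{\epsilon_1w_1,\epsilon_2w_2,\epsilon_3w_3\}$ with $\epsilon_1\epsilon_2\epsilon_3=-1$, a condition manifestly preserved by the substitution, so they match the corollary's $C_4,\dots,C_7$; and the conic through five of the $\pm w_k$ is the conic $N_1$. Since Theorem~\ref{scc} places all eight curves through the single point $u_0=\det A/\det B$ of (\ref{defn-AB}), the same point serves here, and inserting $u_1=w_2w_3/w_1$, $u_2=w_1w_3/w_2$, $u_3=w_1w_2/w_3$ into (\ref{defn-AB}) produces the asserted formula.
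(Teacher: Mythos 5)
Your proposal is correct and follows the same route as the paper, which obtains the corollary from Theorem~\ref{scc} by the substitution $w_1=\sqrt{u_2u_3}$, $w_2=\sqrt{u_1u_3}$, $w_3=\sqrt{u_1u_2}$; your reverse substitution $u_k=w_iw_j/w_k$, together with the choice $\sqrt{u_k}:=p/w_k$ where $p^2=w_1w_2w_3$, is exactly the consistent selection of branches needed to make this work, and your observation that the proof of Theorem~\ref{scc} uses only $w_k^2=u_k$ and $\sqrt{u_iu_j}=w_iw_j$ (never the principal branch) is the right justification for re-running it with these roots. The only loose end, which you flag honestly and which the paper also leaves implicit, is the degenerate case $w_i=-w_j$ (where the corresponding $u_i$ and $u_j$ coincide and Theorem~\ref{scc} does not directly apply), a case the corollary's hypotheses do not exclude and which would require a direct check or a continuity argument.
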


Though Corollary~\ref{scc-w} appears simpler, an attempt to prove it along the lines of Theorem~\ref{scc} does not lead to 
shorter calculations. 

Alternatively, one may prefer to attack Corollary~\ref{scc-w} with tools from elementary algebraic geometry, such as the {\it automatic geometric theorem proving} machinery from \cite[Chapter 6, \S 4]{CLO:1992}. See also \cite{CHOU:1988}. Again the manipulations are not significantly simpler, and we hope that the chosen approach will appeal to a wider audience. 

\section{Appendix A: supporting lemmas}
\label{appA-tech}

 \begin{lemma}
 \label{2014-04-14-b}
 If the zeros $\alpha_1,\alpha_2, \alpha_3$ are not vertices of an equilateral triangle, then
 the system 
 \begin{align}
 \label{system*}
 \{P_1(z_1,z_2)=0, P_2(z_1,z_2) = 0\}
 \end{align}
 has a unique solution $(u^*, v^*)$. Otherwise, it has no solutions.
 \end{lemma}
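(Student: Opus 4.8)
The plan is to exploit the fact that both $P_1$ and $P_2$ are symmetric in $z_1,z_2$ and depend on them only through the elementary symmetric functions. Writing $s:=z_1+z_2$ and $p:=z_1z_2$, one has
\begin{align*}
P_1(z_1,z_2)=p+\tfrac{a}{3}s+\tfrac{b}{3},\qquad P_2(z_1,z_2)=\tfrac{a}{3}p+\tfrac{b}{3}s+c,
\end{align*}
so that the system (\ref{system*}) is equivalent to a \emph{linear} $2\times2$ system in the unknowns $(s,p)$ with coefficient matrix $\left[\begin{smallmatrix} a/3 & 1 \\ b/3 & a/3\end{smallmatrix}\right]$, whose determinant equals $(a^2-3b)/9$. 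Since $a=-(\alpha_1+\alpha_2+\alpha_3)$ and $b=\alpha_1\alpha_2+\alpha_1\alpha_3+\alpha_2\alpha_3$, Lemma~\ref{lem-1.1} tells us that the zeros form an equilateral triangle precisely when $a^2=3b$, i.e. exactly when this determinant vanishes. This single observation cleanly separates the two cases of the lemma.

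First I would treat the non-degenerate case $a^2-3b\neq0$. Here the linear system has a unique solution $(s,p)$, which I would compute explicitly as $s=(9c-ab)/(a^2-3b)$ and $p=(b^2-3ac)/(a^2-3b)$. Once $s$ and $p$ are fixed, the \emph{unordered} pair $\{z_1,z_2\}$ is uniquely determined as the two roots of $t^2-st+p=0$, and it remains only to identify these roots with $u^*$ and $v^*$. For this I would verify, using the explicit expressions (\ref{repru^*v^*}) together with Vieta's formulas, that $u^*+v^*=s$ and $u^*v^*=p$. Because $P_1,P_2$ are symmetric, both orderings $(u^*,v^*)$ and $(v^*,u^*)$ solve (\ref{system*}); the uniqueness asserted is therefore uniqueness of the unordered pair, and $u^*\neq v^*$ has already been recorded in the excerpt.

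For the equilateral case $a^2-3b=0$ I would argue by inconsistency. When the determinant vanishes the two rows of the coefficient matrix are proportional, the second being $\tfrac{a}{3}$ times the first; hence a solution can exist only if the right-hand sides obey the same relation, namely $-c=\tfrac{a}{3}\bigl(-\tfrac{b}{3}\bigr)$, which reduces to $9c-ab=0$. But (\ref{maybe-last}) asserts that $3(9c-ab)=27c-a^3\neq0$ in the equilateral case, so the compatibility condition fails and the system has no solution, as claimed.

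Every step is elementary linear algebra once the symmetrization is made. The only genuinely delicate part is the bookkeeping in the non-degenerate case, specifically the verification that $u^*v^*=p$; this hinges on recognizing the identity $(9c-ab)^2-4(a^2-3b)(b^2-3ac)=-3\Delta$, so that the product of the two expressions in (\ref{repru^*v^*}) collapses to $(b^2-3ac)/(a^2-3b)$. Everything else is solving a $2\times2$ system and invoking (\ref{maybe-last}).
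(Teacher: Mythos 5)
Your proof is correct, but it takes a genuinely different route from the paper's. You symmetrize: since $P_1$ and $P_2$ depend on $(z_1,z_2)$ only through $s=z_1+z_2$ and $p=z_1z_2$, the system (\ref{system*}) becomes a $2\times 2$ linear system in $(s,p)$ whose determinant $(a^2-3b)/9$ vanishes exactly in the equilateral case; Cramer's rule, Vieta, and the discriminant identity $(9c-ab)^2-4(a^2-3b)(b^2-3ac)=-3\Delta$ then identify the unordered solution pair with $\{u^*,v^*\}$, while in the equilateral case the rank-one compatibility condition $9c-ab=0$ contradicts (\ref{maybe-last}). The paper instead leans on the factorization identities (\ref{repres-Pu^*v^*}) and (\ref{repres-Pv^*u^*}): any common zero of $P_1,P_2$ makes $P(z_1,z_2,u^*)$ and $P(z_1,z_2,v^*)$ vanish, forcing one coordinate to be $v^*$ and the other $u^*$; in the equilateral case it evaluates $P(z_1,z_2,v^*)$ (or $P(z_1,z_2,u^*)$) to an explicit nonzero constant. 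Your argument is more self-contained and makes the dichotomy transparent (the equilateral condition is literally the vanishing of the determinant), at the cost of one algebraic identity that you should actually verify rather than merely ``recognize'' --- it does hold, and is in effect established in the paper's proof of Lemma~\ref{lem-20140227} together with (\ref{repru^*v^*}); note also that citing (\ref{repru^*v^*}) is legitimate here since its derivation does not depend on this lemma. The paper's route is shorter once (\ref{repres-Pu^*v^*})--(\ref{repres-Pv^*u^*}) are in hand, and also yields the existence direction ($(u^*,v^*)$ really is a solution) without further computation; in your version that direction is supplied by the Vieta check, and your remark that uniqueness is of the unordered pair is a point the paper glosses over as well.
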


\begin{proof}
Suppose $\alpha_1,\alpha_2, \alpha_3$ are not vertices of an equilateral triangle.
Let $(z_1, z_2)$ be such that $P_1(z_1,z_2)=0, P_2(z_1,z_2) = 0$, then 
\begin{align*}
0&= P_1(z_1, z_2) u^* + P_2(z_1, z_2) = P(z_1,z_2,u^*) \\
&=\frac{1}{3} \frac{(\alpha_1e_2+\alpha_2e_1+\alpha_3e_3)^2}{(\alpha_1e_1+\alpha_2e_2+\alpha_3e_3)} (z_1-v^*)(z_2-v^*),
\end{align*}
where we used (\ref{repres-Pu^*v^*}). Lemma~\ref{lem-1.1} now implies that $z_1$ or $z_2$ is equal to $v^*$. Suppose $z_2=v^*$.
Then, for any $z_3 \in \CC \setminus \{u^*\}$, we have
\begin{align*}
0&= P_1(z_1, v^*) z_3 + P_2(z_1, v^*) = P(z_1,v^*,z_3) \\
&=\frac{1}{3} \frac{(\alpha_1e_1+\alpha_2e_2+\alpha_3e_3)^2}{(\alpha_1e_2+\alpha_2e_1+\alpha_3e_3)} (z_1-u^*)(z_3-u^*),
\end{align*}
where we used (\ref{repres-Pv^*u^*}). This shows that $z_1 = u^*$.

Suppose now $\alpha_1,\alpha_2, \alpha_3$ are vertices of an equilateral triangle.
By (\ref{ifequitriag}), we consider two cases.

a) If $ \alpha_1e_1+\alpha_2e_2+\alpha_3e_3=0$, then solving this equality for $\alpha_3$, one can verify the following statements:
$$
0 \not= -\frac{i}{3\sqrt{3}}(\alpha_1-\alpha_2)^3 =  P(z_1,z_2,v^*) = P_1(z_1,z_2)v^* + P_2(z_1,z_2).
$$
This shows that system (\ref{system*}) has no solution.

b) In the case $\alpha_1e_2+\alpha_2e_1+\alpha_3e_3 = 0$, the proof is analogous to part a), one only needs to consider $P(z_1,z_2,u^*)$.
\end{proof}
 
 \begin{corollary}
\label{lem-2014-02-25-a}
 The system $\{P_1(z,z)=0, P_2(z,z) = 0\}$ has no solutions in $\CC$.
 \end{corollary}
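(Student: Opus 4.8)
The plan is to read this off directly from the preceding Lemma~\ref{2014-04-14-b}, using the fact, recorded in the preliminaries, that $u^*$ and $v^*$ are always distinct. The guiding observation is that a solution $z \in \CC$ of the diagonal system $\{P_1(z,z)=0,\ P_2(z,z)=0\}$ is nothing but a point of the special form $(z,z)$ lying on the two-variable variety $\{P_1(z_1,z_2)=0,\ P_2(z_1,z_2)=0\}$ studied in Lemma~\ref{2014-04-14-b}.

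I would then split into the two cases of that lemma. If $\alpha_1,\alpha_2,\alpha_3$ are not vertices of an equilateral triangle, Lemma~\ref{2014-04-14-b} says the only solution of the two-variable system is $(u^*,v^*)$. A diagonal solution $(z,z)$ would force $z=u^*$ and $z=v^*$ simultaneously, hence $u^*=v^*$; this contradicts the inequality $u^*-v^*\neq 0$ from the preliminaries, which itself rests on the distinctness of $\alpha_1,\alpha_2,\alpha_3$ (the numerator there is a nonzero multiple of $(\alpha_1-\alpha_2)(\alpha_1-\alpha_3)(\alpha_2-\alpha_3)$). If instead the $\alpha_k$ are vertices of an equilateral triangle, Lemma~\ref{2014-04-14-b} asserts the two-variable system has no solution at all, so in particular none on the diagonal. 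The two cases together give the claim.

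There is essentially no obstacle here: all the content has already been packaged into Lemma~\ref{2014-04-14-b} and the computation $u^*\neq v^*$, so the corollary really is immediate. For completeness I would point out a self-contained alternative that bypasses $u^*,v^*$ entirely: the resultant of the two quadratics $P_1(z,z)=z^2+\tfrac{2a}{3}z+\tfrac{b}{3}$ and $P_2(z,z)=\tfrac{a}{3}z^2+\tfrac{2b}{3}z+c$ evaluates to $-\Delta/27$, which is nonzero precisely because $\alpha_1,\alpha_2,\alpha_3$ are distinct. Since the two polynomials share a root if and only if this resultant vanishes, the system again has no solution. This route is short but less in keeping with the surrounding development, so I would keep the case analysis via Lemma~\ref{2014-04-14-b} as the primary argument and mention the resultant computation only as a remark.
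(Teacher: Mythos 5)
Your primary argument is correct and is exactly the (unstated) deduction the paper intends: the corollary is placed immediately after Lemma~\ref{2014-04-14-b} precisely because a diagonal solution $(z,z)$ would force $u^*=v^*$ in the non-equilateral case and is outright impossible in the equilateral case. Your resultant remark is also correct (the resultant of $P_1(z,z)$ and $P_2(z,z)$ does equal $-\Delta/27$, consistent with $P_1(z,z)=p'(z)/3$ and $P_2(z,z)=p(z)-zP_1(z,z)$), but it is supplementary rather than a different route.
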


\begin{lemma}
\label{lem-20140227}
If $\alpha_1$, $\alpha_2$, and $\alpha_3$ are vertices of an equilateral triangle, then
the only solutions of the system 
\begin{align}
\label{sys-zzw}
\{P(z,z,w)=0, P(z,w,w)=0\}
\end{align}
are  $(\alpha_1, \alpha_1), (\alpha_2, \alpha_2), (\alpha_3, \alpha_3)$. 
Otherwise, (\ref{sys-zzw}) has one more solution $(u^*, v^*).$
\end{lemma}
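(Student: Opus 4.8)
The plan is to exploit the symmetric splitting $P(z_1,z_2,z_3)=P_1(z_1,z_2)z_3+P_2(z_1,z_2)$. Since $P$ is symmetric in its three arguments, I can write $P(z,z,w)=P_1(z,w)\,z+P_2(z,w)$ and $P(z,w,w)=P_1(z,w)\,w+P_2(z,w)$, so that subtracting the two equations gives the clean factorization $P(z,z,w)-P(z,w,w)=(z-w)\,P_1(z,w)$. Hence every solution of (\ref{sys-zzw}) must lie in one of two branches: either $z=w$, or $P_1(z,w)=0$. This reduction is the structural heart of the argument and makes the rest essentially bookkeeping.

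On the first branch $z=w$, both equations collapse to $P(z,z,z)=p(z)=0$, so $z\in\{\alpha_1,\alpha_2,\alpha_3\}$, producing exactly the three diagonal solutions $(\alpha_k,\alpha_k)$; these are distinct because the $\alpha_k$ are, and they occur in every case. On the second branch $P_1(z,w)=0$ we have $P(z,z,w)=P(z,w,w)=P_2(z,w)$, so $(z,w)$ solves the system precisely when it solves $\{P_1(z,w)=0,\ P_2(z,w)=0\}$. Here I would invoke Lemma~\ref{2014-04-14-b} (together with the representations (\ref{repres-Pu^*v^*}) and (\ref{repres-Pv^*u^*}), which are available earlier): in the non-equilateral case this system is solved by $(u^*,v^*)$, while in the equilateral case it has no finite solution. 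Because $u^*\neq v^*$, the off-diagonal solution is genuinely new and does not coincide with any diagonal one, which is exactly the claimed dichotomy.

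To produce the byproduct formula (\ref{repru^*v^*}) I would solve $\{P_1=0,\,P_2=0\}$ directly through the elementary symmetric functions $s:=z+w$ and $q:=zw$. The two equations become $q+\tfrac{a}{3}s+\tfrac{b}{3}=0$ and $\tfrac{a}{3}q+\tfrac{b}{3}s+c=0$; eliminating $q$ yields $(a^2-3b)\,s=9c-ab$, hence $s=(9c-ab)/(a^2-3b)$ and $q=(b^2-3ac)/(a^2-3b)$ whenever $a^2-3b\neq 0$. Then $z$ and $w$ are the two roots of $(a^2-3b)\,t^2-(9c-ab)\,t+(b^2-3ac)=0$, and the discriminant of this quadratic simplifies to $-3\Delta$, which delivers (\ref{repru^*v^*}) and identifies the roots as $u^*$ and $v^*$. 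In the equilateral case $a^2-3b=0$ the same elimination forces $0=9c-ab$, which contradicts (\ref{maybe-last}); this is the transparent reason that no finite off-diagonal solution exists there.

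The only genuine labor is the algebra, and I expect the discriminant computation showing it equals $-3\Delta$ to be the main obstacle: it is routine but it is the step that turns the solution of $\{P_1=0,\,P_2=0\}$ into the tidy closed form (\ref{repru^*v^*}), and it must be paired with the consistency check via (\ref{maybe-last}) to dispose of the equilateral case cleanly. I would also note, for accuracy, that by the symmetry of $P_1$ and $P_2$ the companion ordering $(v^*,u^*)$ satisfies the system as well, so the off-diagonal solutions come as the unordered pair $\{u^*,v^*\}$.
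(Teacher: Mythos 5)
Your proposal is correct and follows essentially the same route as the paper: both hinge on the factorization $P(z,z,w)-P(z,w,w)=(z-w)P_1(z,w)$, reduce the off-diagonal branch to the system $\{P_1=0,\,P_2=0\}$, and solve it via the symmetric functions $z+w$ and $zw$, using (\ref{maybe-last}) to rule out the equilateral case and the discriminant computation (equal to $-3\Delta$) to obtain (\ref{repru^*v^*}). Your additional appeal to Lemma~\ref{2014-04-14-b} is a legitimate (non-circular) shortcut for identifying the off-diagonal solution as $(u^*,v^*)$, where the paper instead invokes the representations (\ref{repres-Pu^*v^*}) and (\ref{repres-Pv^*u^*}) directly, and your remark that the ordered pair $(v^*,u^*)$ also solves the symmetric system is an accurate refinement of the paper's slightly loose uniqueness claim.
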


\begin{proof}
It is clear that $(\alpha_1, \alpha_1), (\alpha_2, \alpha_2), (\alpha_3, \alpha_3)$ are solutions and that these are the only solutions $(z,w)$ with equal components. So, suppose that $z \not= w$. Since
\begin{align*}
P(z, z, w)-P(z, w, w) = (z-w)(zw+(1/3)a(z+w)+(1/3)b),
\end{align*}
any solution $(z,w)$, with distinct components, satisfies
$$
zw+(1/3)a(z+w)+(1/3)b = 0.
$$
Multiplying this equation by $z$ and subtracting it from $P(z,z,w)=0$, we obtain
$$
(1/3)azw+(1/3)b(z+w)+c = 0.
$$
The last two equations form a system of linear equations for the quantities $zw$ and $z+w$. 
If $\alpha_1$, $\alpha_2$, and $\alpha_3$ are vertices of equilateral triangle, that is, $a^2 = 3b$, then the system has no solutions, as can be seen using (\ref{maybe-last}). Otherwise, the 
solution is
$$
zw= \frac{b^2-3ac}{a^2-3b} \mbox{ and } z+w = \frac{9c-ab}{a^2-3b}.
$$
Hence, $z$ and $w$ are the roots of the quadratic equation 
$$
(a^2-3b)t^2 + (ab-9c)t + (b^2-3ac) = 0.
$$
Hence, 
$$
z = \frac{(9c-ab)+i\sqrt{3} \sqrt{\Delta}}{2(a^2-3b)} \mbox{ and }
w = \frac{(9c-ab)-i\sqrt{3} \sqrt{\Delta}}{2(a^2-3b)}.
$$
This shows that the system $\{P(z,z,w)=0, P(z,w,w)=0\}$ has a unique solution with $z \not= w$.  Representations (\ref{repres-Pu^*v^*}) and (\ref{repres-Pv^*u^*}) show that it must be $(u^*, v^*)$.
\end{proof}

\begin{lemma}
The values $P_1(u^*, u^*), P_2(u^*, u^*), P_1(v^*, v^*),$ and $P_2(v^*, v^*)$ are all non-zero.
\end{lemma}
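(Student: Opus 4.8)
The plan is to reformulate the four claims in terms of two fixed quadratics evaluated at $u^*$ and $v^*$, and then to read off the vanishing/non-vanishing from the map $Q$. Put $R_1(z):=3z^2+2az+b$ and $R_2(z):=az^2+2bz+3c$, so that $P_1(z,z)=\tfrac13 R_1(z)$ and $P_2(z,z)=\tfrac13 R_2(z)$. The four quantities in the lemma are then, up to the harmless factor $\tfrac13$, the numbers $R_1(u^*),R_2(u^*),R_1(v^*),R_2(v^*)$, so the lemma asserts exactly that neither $u^*$ nor $v^*$ is a root of $R_1$ or of $R_2$. By Corollary~\ref{lem-2014-02-25-a} the quadratics $R_1$ and $R_2$ have no common zero, so at each of the points $u^*,v^*$ at most one of $R_1,R_2$ can vanish.

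First I would dispose of the two $P_1$-values. Since $Q(z)=-(az^2+2bz+3c)/(3z^2+2az+b)=-R_2(z)/R_1(z)$ and, by Lemma~\ref{lem-20140227-b}, $Q(u^*)=v^*$ and $Q(v^*)=u^*$, a hypothetical $R_1(u^*)=0$ would (together with $R_2(u^*)\ne0$) give $Q(u^*)=\infty$, that is $v^*=\infty$. When $\alpha_1,\alpha_2,\alpha_3$ are not the vertices of an equilateral triangle both $u^*$ and $v^*$ are finite, so this cannot happen; hence $P_1(u^*,u^*)\ne0$, and interchanging the roles of $u^*$ and $v^*$ gives $P_1(v^*,v^*)\ne0$ as well. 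In the equilateral case one of $u^*,v^*$ is infinite and the corresponding $P_1$-value does vanish, by (\ref{maybe-last-2}), so the non-equilateral hypothesis is genuinely needed here, consistently with the rest of the paper.

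The same device reduces the two $P_2$-claims to something sharper. Indeed $R_2(u^*)=0$ is, since $R_1(u^*)\ne0$, the same as $Q(u^*)=0$, i.e. $v^*=0$; and symmetrically $R_2(v^*)=0$ is equivalent to $u^*=0$. Thus the outstanding content of the lemma is precisely that $u^*\ne0$ and $v^*\ne0$. Equivalently, since by (\ref{repru^*v^*}) the points $u^*,v^*$ are the two roots of $R_0(z):=(a^2-3b)z^2+(ab-9c)z+(b^2-3ac)$, the whole statement is the pair of resultant conditions $\operatorname{Res}(R_0,R_1)\ne0$ and $\operatorname{Res}(R_0,R_2)\ne0$, a finite elimination in $a,b,c$ that could in principle be checked symbolically.

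I expect the genuine obstacle to be exactly the nonvanishing of $u^*$ and $v^*$ (equivalently $\operatorname{Res}(R_0,R_2)\ne0$), which, unlike the $P_1$-values, is not a matter of finiteness. From the definition of $u^*$ one has $u^*=0$ if and only if $\alpha_1\alpha_2 e_3+\alpha_1\alpha_3 e_2+\alpha_2\alpha_3 e_1=0$, and this numerator is not ruled out by the distinctness of $\alpha_1,\alpha_2,\alpha_3$ alone. I would therefore try to exhibit an algebraic identity writing $\operatorname{Res}(R_0,R_2)$ as a nonzero multiple of the discriminant $\Delta$; should no such identity exist, the conclusion would instead be that the hypotheses must be strengthened so as to exclude $u^*=0$ and $v^*=0$. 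This is the step on which the whole argument hinges.
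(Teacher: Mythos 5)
Your reduction is essentially correct and, perhaps surprisingly, runs parallel to the paper's own argument: the paper reads off from (\ref{repres-Pu^*v^*}) that $P_1(z_1,u^*)=C(z_1-v^*)$ and $P_2(z_1,u^*)=-Cv^*(z_1-v^*)$, where $C=\tfrac13(\alpha_1e_2+\alpha_2e_1+\alpha_3e_3)^2/(\alpha_1e_1+\alpha_2e_2+\alpha_3e_3)$, and hence that $P_1(u^*,u^*)=C(u^*-v^*)$ and $P_2(u^*,u^*)=-C(u^*-v^*)\,v^*$; this is exactly your identity $P_2(u^*,u^*)=-v^*P_1(u^*,u^*)$ coming from $Q(u^*)=v^*$. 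Your disposal of the two $P_1$-values is complete in the non-equilateral case (where both $u^*,v^*$ are finite), and you are right that the non-equilateral hypothesis is implicitly in force, since otherwise $P_1$ vanishes at the finite one of $u^*,v^*$ by (\ref{maybe-last-2}). The genuine gap in your write-up is that the two $P_2$-claims are only \emph{reduced} to the assertion $u^*\ne0$ and $v^*\ne0$, which you leave unproved.

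That gap, however, cannot be closed, and your second alternative is the correct one: no identity expressing your resultant as a nonzero multiple of $\Delta$ exists, and the lemma is false as stated. Since $u^*$ and $v^*$ are the two roots of $(a^2-3b)t^2+(ab-9c)t+(b^2-3ac)$, one has $u^*v^*=(b^2-3ac)/(a^2-3b)$, so one of them vanishes precisely when $b^2=3ac$ --- equivalently, precisely when the reciprocals $1/\alpha_1,1/\alpha_2,1/\alpha_3$ are the vertices of an equilateral triangle --- and this is fully compatible with $\alpha_1,\alpha_2,\alpha_3$ being distinct and themselves non-equilateral. A concrete instance is $\alpha_1=1$, $\alpha_2=1/2$, $\alpha_3=1/2+i\sqrt{3}/6$: here $a^2\ne 3b$ but $b^2=3ac$, one checks $v^*=0$, and a direct computation gives $P_2(u^*,u^*)=\tfrac{a}{3}(u^*)^2+\tfrac{2b}{3}u^*+c=0$. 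The paper's proof commits exactly the oversight you anticipated: it exhibits $P_2(u^*,u^*)$ as $-C(u^*-v^*)v^*$ and declares it non-zero without accounting for the factor $v^*$. So the honest conclusion is that the hypothesis $b^2\ne 3ac$ (equivalently $u^*v^*\ne0$) must be added for the $P_2$-part of the statement to hold; with that hypothesis your argument, like the paper's, goes through.
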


\begin{proof}
Expressing (\ref{repres-Pu^*v^*}) in the form
$$
P(z_1,z_2, u^*) = P_1(z_1, u^*) z_2 + P_2(z_1, u^*),
$$ 
shows that
\begin{align*}
P_1(u^*, u^*) &=  \frac{1}{3} \frac{(\alpha_1e_2+\alpha_2e_1+\alpha_3e_3)^2}{(\alpha_1e_1+\alpha_2e_2+\alpha_3e_3)} (u^*-v^*)
\quad \mbox{and }\\
P_2(u^*, u^*) &= - \frac{1}{3} \frac{(\alpha_1e_2+\alpha_2e_1+\alpha_3e_3)^2}{(\alpha_1e_1+\alpha_2e_2+\alpha_3e_3)} (u^*-v^*) v^*
\end{align*}
are non-zero. Similarly for $P_1(v^*, v^*)$ and $P_2(v^*, v^*)$.
\end{proof}

\section{Appendix B: proof of Theorem~\ref{thm-reduction}}
\label{sect-proofThm}

Let $W(z)=N(z)/D(z)$, where
\begin{align*}
N(z) &:= z(\alpha_1\alpha_2 e_3 + \alpha_1\alpha_3 e_2 + \alpha_2\alpha_3 e_1)+(\alpha_1\alpha_2 e_3 + \alpha_1\alpha_3 e_1 + \alpha_2\alpha_3 e_2), \\
D(z) &:= -z(\alpha_1 e_1 + \alpha_2 e_2 + \alpha_3 e_3) - (\alpha_1 e_2 + \alpha_2 e_1 + \alpha_3 e_3).
\end{align*}
Using the fact that $e_1+e_2+e_3 = 0$, we have
\begin{align}
N(z) - \alpha_1 D(z) &= z(\alpha_1\alpha_2 e_3 + \alpha_1\alpha_3 e_2 + \alpha_2\alpha_3 e_1 + \alpha_1\alpha_1 e_1 + \alpha_1\alpha_2 e_2 + \alpha_1\alpha_3 e_3 ) \nonumber \\
&\hspace{0.5cm}+ (\alpha_1\alpha_2 e_3 + \alpha_1\alpha_3 e_1 + \alpha_2\alpha_3 e_2 + \alpha_1 \alpha_1 e_2 + \alpha_1\alpha_2 e_1 + \alpha_1 \alpha_3 e_3) \nonumber \\
&= z(-\alpha_1\alpha_2 e_1 - \alpha_1\alpha_3 e_1 + \alpha_2\alpha_3 e_1 + \alpha_1\alpha_1 e_1 ) \nonumber \\
&\hspace{0.5cm} + (-\alpha_1\alpha_2 e_2 - \alpha_1\alpha_3 e_2 + \alpha_2\alpha_3 e_2 + \alpha_1 \alpha_1 e_2 ) \nonumber \\
&= (z e_1 + e_2)(-\alpha_1\alpha_2 - \alpha_1\alpha_3 + \alpha_2\alpha_3 + \alpha_1\alpha_1) \nonumber \\
&= (z e_1 + e_2)(\alpha_1-\alpha_2)(\alpha_1-\alpha_3). 
\label{ident-1-1-05-2012}
\end{align}
Similarly, we have
\begin{align}
\label{ident-2-1-05-2012}
N(z) - \alpha_2 D(z) &= (z e_2 + e_1)(\alpha_2 -\alpha_3)(\alpha_2-\alpha_1), \\
N(z) - \alpha_3 D(z) &= (z e_3 + e_3)(\alpha_3 -\alpha_1)(\alpha_3-\alpha_2).
\label{ident-3-1-05-2012}
\end{align}

Next, separate the numerator and the denominator of the rational function $F(z_1,z_2)$. That is, let 
$F(z_1,z_2) = F_1(z_1,z_2)/ F_2(z_1,z_2)$, where
\begin{align*}
F_1(z_1,z_2) &:=  z_1z_2 (\alpha_1+\alpha_2+\alpha_3) - (z_1+z_2)(\alpha_1\alpha_2+\alpha_1\alpha_3+\alpha_2\alpha_3) + 3\alpha_1\alpha_2\alpha_3, \\
F_2(z_1,z_2) &:= 3z_1z_2- (z_1+z_2)(\alpha_1+\alpha_2+\alpha_3)+(\alpha_1\alpha_2+\alpha_1\alpha_3+\alpha_2\alpha_3).
\end{align*} 
We need to show that
\begin{align}
\label{need-to-show}
\frac{F_1(N(z_1)/D(z_1), N(z_2)/D(z_2))}{F_2(N(z_1)/D(z_1),N(z_2)/D(z_2))} = \frac{N(-1/z_1z_2)}{D(-1/z_1z_2)}.
\end{align} 
Multiply the numerator and denominator on the left-hand side of (\ref{need-to-show}) by $D(z_1) D(z_2)$ and consider each one separately. Below we utilize identities (\ref{ident-1-1-05-2012}), (\ref{ident-2-1-05-2012}), and (\ref{ident-3-1-05-2012}) which hold for every $z \in \CC$. 
\begin{align}
F_1(&N(z_1)/D(z_1), N(z_2)/D(z_2)) D(z_1) D(z_2)  = N(z_1)N(z_2) (\alpha_1+\alpha_2+\alpha_3)  \nonumber \\
&\hspace{0.5cm} - (N(z_1)D(z_2)+N(z_2)D(z_1))(\alpha_1\alpha_2+\alpha_1\alpha_3+\alpha_2\alpha_3) + 3\alpha_1\alpha_2\alpha_3 D(z_1)D(z_2) \nonumber \\
&= \alpha_3 (N(z_2) - \alpha_1D(z_2)) (N(z_1) - \alpha_2 D(z_1)) \nonumber  \\
&\hspace{0.5cm} + \alpha_1 (N(z_2) - \alpha_2D(z_2)) (N(z_1) - \alpha_3 D(z_1)) \nonumber \\
&\hspace{1cm} + \alpha_2 (N(z_2) - \alpha_3 D(z_2)) (N(z_1) - \alpha_1 D(z_1)) \nonumber \\
&= \alpha_3 (z_2 e_1 + e_2)(\alpha_1-\alpha_2)(\alpha_1-\alpha_3) (z_1 e_2 + e_1)(\alpha_2 -\alpha_3)(\alpha_2-\alpha_1) \nonumber \\
&\hspace{0.5cm} + \alpha_1 (z_2 e_2 + e_1)(\alpha_2 -\alpha_3)(\alpha_2-\alpha_1)(z_1 e_3 + e_3)(\alpha_3 -\alpha_1)(\alpha_3-\alpha_2) \nonumber \\
&\hspace{1cm} + \alpha_2 (z_2 e_3 + e_3)(\alpha_3 -\alpha_1)(\alpha_3-\alpha_2)(z_1 e_1 + e_2)(\alpha_1-\alpha_2)(\alpha_1-\alpha_3) \nonumber  \\
&= (\alpha_1-\alpha_2)(\alpha_1-\alpha_3) (\alpha_2 -\alpha_3) \big(\alpha_3 (z_2 e_1 + e_2) (z_1 e_2 + e_1) (\alpha_2-\alpha_1)  \nonumber \\
&\hspace{0.5cm} + \alpha_1 (z_2 e_2 + e_1)(z_1 e_3 + e_3)(\alpha_3-\alpha_2) +  \alpha_2 (z_2 e_3 + e_3) (z_1 e_1 + e_2)(\alpha_1 -\alpha_3) \big) \nonumber \\
&\label{expr-boza}
\end{align}
Next, using $e_1-e_2 =  i\sqrt{3} e_3$, $e_1-e_3 = - i \sqrt{3} e_2$, and $e_2-e_3 = i \sqrt{3} e_1$ we have the identities
\begin{align*}
(z_2e_3+e_3)(z_1e_1+e_2)-(z_2e_2+e_1)(z_1e_3+e_3) &= z_1z_2(e_1e_3 - e_2e_3) + (e_2e_3 - e_1e_3) \\
&= -i\sqrt{3} (z_1z_2e_3 -e_3)  , \\
(z_2e_2+e_1)(z_1e_3+e_3)-(z_2e_1+e_2)(z_1e_2+e_1) &= z_1z_2(e_2e_3 - e_2e_1) +(e_1e_3-e_1e_2) \\
&=-i\sqrt{3} (z_1z_2e_1 - e_2), \\
(z_2e_1+e_2)(z_1e_2+e_1)-(z_2e_3+e_3)(z_1e_1+e_2) &= z_1z_2(e_1e_2-e_1e_3) + (e_1e_2-e_2e_3)\\
&= -i\sqrt{3} (z_1z_2e_2 - e_1).
\end{align*}
Substituting into expression (\ref{expr-boza}) we continue
\begin{align}
F_1(N(z_1)/&D(z_1), N(z_2)/D(z_2)) D(z_1) D(z_2) = -i\sqrt{3} (\alpha_1-\alpha_2)(\alpha_1-\alpha_3) (\alpha_2 -\alpha_3)  \times \nonumber \\
&\hspace{0.5cm} \times
\big( \alpha_1\alpha_2 (z_1z_2e_3 -e_3) + \alpha_1\alpha_3 (z_1z_2e_1 -e_2) + \alpha_2\alpha_3 (z_1z_2e_2 -e_1)\big) 
\nonumber \\
&= -i \sqrt{3} (\alpha_1-\alpha_2)(\alpha_1-\alpha_3) (\alpha_2 -\alpha_3) (z_1z_2) N(-1/z_1z_2).
\label{final-num}
\end{align}
For the denominator we have
\begin{align}
F_2(&N(z_1)/D(z_1), N(z_2)/D(z_2)) D(z_1) D(z_2)  = 3N(z_1)N(z_2)  \nonumber \\
&\hspace{0.5cm} - (N(z_1)D(z_2)+N(z_2)D(z_1))(\alpha_1+\alpha_2+\alpha_3)  \nonumber \\
&\hspace{1cm} +D(z_1)D(z_2) (\alpha_1\alpha_2+\alpha_1\alpha_3+\alpha_2\alpha_3) \nonumber \\
&= (N(z_2) - \alpha_1D(z_2)) (N(z_1) - \alpha_3 D(z_1)) \nonumber \\
&\hspace{0.5cm} + (N(z_2) - \alpha_2D(z_2)) (N(z_1) - \alpha_1 D(z_1)) \nonumber \\
&\hspace{1cm} + (N(z_2) - \alpha_3 D(z_2)) (N(z_1) - \alpha_2 D(z_1)) \nonumber \\
&=  (z_2 e_1 + e_2)(\alpha_1-\alpha_2)(\alpha_1-\alpha_3) (z_1 e_3 + e_3)(\alpha_3 -\alpha_1)(\alpha_3-\alpha_2) \nonumber \\
&\hspace{0.5cm} +  (z_2 e_2 + e_1)(\alpha_2 -\alpha_3)(\alpha_2-\alpha_1)(z_1 e_1 + e_2)(\alpha_1 -\alpha_2)(\alpha_1-\alpha_3) \nonumber \\
&\hspace{1cm} +  (z_2 e_3 + e_3)(\alpha_3 -\alpha_1)(\alpha_3-\alpha_2)(z_1 e_2 + e_1)(\alpha_2-\alpha_3)(\alpha_2-\alpha_1) \nonumber \\
&= (\alpha_1-\alpha_2)(\alpha_1-\alpha_3) (\alpha_2 -\alpha_3) 
\big( (z_2 e_1 + e_2)(z_1 e_3 + e_3)(\alpha_1 -\alpha_3) \nonumber \\
&\hspace{0.5cm} +  (z_2 e_2 + e_1)(z_1 e_1 + e_2)(\alpha_2 -\alpha_1) 
+   (z_2 e_3 + e_3)(z_1 e_2 + e_1)(\alpha_3-\alpha_2)\big). \nonumber \\
&\label{expr-boza-1}
\end{align}
Again using $e_1-e_2 =  i\sqrt{3} e_3$, $e_1-e_3 = - i \sqrt{3} e_2$, and $e_2-e_3 = i \sqrt{3} e_1$ we have the identities
\begin{align*}
 (z_2 e_1 + e_2)(z_1 e_3 + e_3) -  (z_2 e_2 + e_1)(z_1 e_1 + e_2) &= z_1z_2(e_1e_3 - e_1e_2) + (e_2e_3 - e_1e_2) \\
&= i\sqrt{3} (z_1z_2e_2 -e_1) , \\
(z_2 e_2 + e_1)(z_1 e_1 + e_2)-(z_2 e_3 + e_3)(z_1 e_2 + e_1) &= z_1z_2(e_1e_2 - e_2e_3) +(e_1e_2-e_1e_3) \\
&=i\sqrt{3} (z_1z_2e_1 - e_2), \\
(z_2 e_3 + e_3)(z_1 e_2 + e_1)-(z_2 e_1 + e_2)(z_1 e_3 + e_3) &= z_1z_2(e_2e_3-e_1e_3) + (e_1e_3-e_2e_3)\\
&= i\sqrt{3} (z_1z_2e_3 - e_3).
\end{align*}
Substituting into  (\ref{expr-boza-1}) we continue
\begin{align}
F_2(N(z_1)/&D(z_1), N(z_2)/D(z_2)) D(z_1) D(z_2) =  i \sqrt{3} (\alpha_1-\alpha_2)(\alpha_1-\alpha_3) (\alpha_2 -\alpha_3) \times \nonumber \\
&\hspace{0.5cm}\times (\alpha_1 (z_1z_2e_2 -e_1) + \alpha_2(z_1z_2e_1 - e_2) + \alpha_3(z_1z_2e_3 - e_3)) 
\nonumber \\
&\hspace{0.5cm} =  - i \sqrt{3} (\alpha_1-\alpha_2)(\alpha_1-\alpha_3) (\alpha_2 -\alpha_3)(z_1z_2) D(-1/z_1z_2).
\label{final-den}
\end{align}
The proof concludes after dividing (\ref{final-num}) by (\ref{final-den}).


\end{document}